\setlist[enumerate]{font={\upshape}, label=(\arabic*), leftmargin=*}
\setlist[itemize]{leftmargin=2.5em}
\setlist[description]{leftmargin=\parindent, 
	itemsep=3pt
}
\newlist{equivlist}{enumerate}{1}
\setlist[equivlist]{font={\upshape}, label=(\roman*)}
\tikzset{ 
	table/.style={
		matrix of nodes,
		nodes={rectangle,text width=1.75em,align=center},
		text depth=1.25ex,
		text height=2.5ex,
		nodes in empty cells
	}
}
\newtheorem{theorem}{Theorem}[section]
\newtheorem{lemma}[theorem]{Lemma}
\newtheorem{claim}[theorem]{Claim}
\Crefname{claim}{Claim}{Claims}
\newlist{lemlist}{enumerate}{1}
\setlist[lemlist]{font={\upshape}, label={\upshape(\alph*)},ref={\thelemma(\alph*)},leftmargin=*}
\newtheorem{conjecture}[theorem]{Conjecture}
\newtheorem{proposition}[theorem]{Proposition}
\let\expandafter\oldproof\csname\string\proof\endcsname
\let\oldendproof\endproof
\renewenvironment{proof}[1][\proofname]{%
	\oldproof[\normalfont\bfseries #1]%
}{\oldendproof}
\newenvironment{subproof}[1][\normalfont\it\proofname]{%
	\begin{proof}[#1]%
	}{%
	\end{proof}%
}
\newcommand{\dd}{\textquotedblleft}
\newcommand{\ee}{\textquotedblright}
\newcommand{\mac}{\mathcal}
\newcommand{\mab}{\mathbb}
\newcommand{\vep}{\varepsilon}
\renewcommand{\subset}{\subseteq}
\newcommand{\cost}{\operatorname{cost}}
\DeclarePairedDelimiter\abs{\lvert}{\rvert}%
\DeclarePairedDelimiter\ceil{\lceil}{\rceil}%
\DeclarePairedDelimiter\floor{\lfloor}{\rfloor}%
\newcommand{\leqnomode}{\tagsleft@true}
\newcommand{\reqnomode}{\tagsleft@false}
\begin{document}
	\title{Highly connected subgraphs with large chromatic number}
	\author{Tung H. Nguyen}
	\address{Princeton University, Princeton, NJ 08544, USA}
	\email{\href{mailto:tunghn@math.princeton.edu}
		{tunghn@math.princeton.edu}}
	\begin{abstract}
		For integers $k\ge1$ and $m\ge2$,
		let $g(k,m)$ be the least integer $n\ge1$ such that every graph with chromatic number at least $n$ contains a $(k+1)$-connected subgraph with chromatic number at least~$m$.
		Refining the recent result Gir\~{a}o and Narayanan that $g(k-1,k)\le 7k+1$ for all $k\ge2$, 
		we prove that $g(k,m)\le \max(m+2k-2,\ceil{(3+\frac{1}{16})k})$ for all $k\ge1$ and $m\ge2$.
		This sharpens earlier results of
		Alon, Kleitman, Saks, Seymour, and Thomassen,
		of Chudnovsky, Penev, Scott, and Trotignon,
		and of Penev, Thomass\'{e}, and Trotignon.
		
		Our result implies that $g(k,k+1)\le\ceil{(3+\frac{1}{16})k}$ for all $k\ge1$,
		making a step closer towards a conjecture of Thomassen from 1983 that $g(k,k+1)\le 3k+1$,
		which was originally a result with a false proof and was the starting point of this research area.
	\end{abstract}
	\maketitle
	\section{Introduction}
	\label{sec:intro}
	All graphs in this paper are finite and with no loops or parallel edges.
	Given a graph $G$ with vertex set $V(G)$,
	a \emph{cutset} in $G$ is a (possibly empty) set $X\subset V(G)$ whose removal from $G$ results in a disconnected graph.
	For an integer $k\ge1$,
	$G$ is said to be \emph{$k$-connected} if it has more than $k$ vertices and has no cutset of cardinality less than $k$.
	A \emph{stable set} of $G$ is a vertex set with pairwise nonadjacent vertices in $G$.
	The \emph{chromatic number} of $G$, denoted by $\chi(G)$, 
	is the least integer $m\ge0$ such that $V(G)$ can be partitioned into $m$ stable sets.
	
	The starting point of this paper is the following conjecture of Thomassen~\cite[Theorem 11]{MR698708} from 1983
	which was originally a result with a false proof.
	\begin{conjecture}
		\label{prob:main}
		For every integer $k\ge1$,
		every graph with chromatic number more than $3k$ contains a $(k+1)$-connected graph with chromatic number more than $k$.
	\end{conjecture}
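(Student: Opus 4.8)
The plan is a plan of attack on an open problem, not a complete argument. Since the conjecture asserts exactly that $g(k,k+1)\le 3k+1$, and this paper proves $g(k,k+1)\le\ceil{(3+\tfrac{1}{16})k}$, which is at most $3k+1$ precisely when $k\le16$, the conjecture is already settled for $k\le16$ and the remaining content is to remove the $\Theta(k)$ surplus when $k\ge17$. I would argue by contradiction: passing to a subgraph we may assume $G$ is vertex-critical with $\chi(G)=3k+1$, hence $\delta(G)\ge3k$, and that $G$ has no $(k+1)$-connected subgraph of chromatic number more than $k$; the goal is a contradiction.

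The skeleton would follow the established approach behind the bound above. Repeatedly cut $G$ along separations of order at most $k$ to build a tree-decomposition-like structure whose bags either induce $(k+1)$-connected subgraphs --- which then have chromatic number at most $k$ by hypothesis --- or are small; then recover a colouring of $G$ by processing the bags along the tree, each bag costing its own chromatic number together with a palette for the single already-coloured adhesion set it inherits from its parent, which morally points to roughly $k+2k=3k$ colours. The difficulty, and the source of the $\Theta(k)$ leak in all current proofs, is that one cannot keep every adhesion set of size at most $k$: forcing a separator created earlier to stay inside a single bag while cutting further can inflate the next adhesion set toward $2k$ and leaves behind a residual bag that is only $k$-connected, so that the hypothesis yields no control on its chromatic number. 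Trading the size of such an inflated adhesion set against the chromatic cost of the residual bag is exactly the optimisation that produces the constant $\tfrac{1}{16}$; forcing it to $0$ is the crux.

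The ingredient I would add is an analysis of the residual bag $H$ that exploits criticality: after discarding the adhesion set $H$ inherits from its parent, each vertex of $H$ still has at least $3k-k=2k$ neighbours inside $H$, so $H$ is dense, and a sparsest-cut dichotomy applies --- either $H$ has a sparse cut, in which case the recursion continues with no loss, or a large dense portion of $H$ is well-linked enough to contain a $(k+1)$-connected subgraph $H'$; if $\chi(H')>k$ we are done, and if $\chi(H')\le k$ I would try to use the $(k+1)$-connectivity of $H'$ together with the minimum-degree-$3k$ structure of $G$ around it to colour all of $H$ with at most $k$ colours in a way consistent with its adhesion set, collapsing the residual case into the controlled one. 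Making the inflated-adhesion penalty and the residual-bag penalty cancel exactly --- rather than meet at a positive balance point --- is the main obstacle and the reason the conjecture remains open; it appears to demand a genuinely new lemma on colouring a $k$-connected graph of large minimum degree that contains a low-chromatic $(k+1)$-connected subgraph, which the present techniques do not provide.
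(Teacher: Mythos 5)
This statement is an open conjecture: the paper does not prove it, and your proposal, as you yourself say, is a plan of attack rather than a proof, so there is nothing here that could be checked against a proof in the paper. Your framing observations are accurate: the conjecture is exactly the assertion $g(k,k+1)\le 3k+1$, and since \cref{thm:main} gives $g(k,k+1)\le\ceil{(3+\frac{1}{16})k}=3k+\ceil{k/16}$, the conjecture does follow from the paper for $k\le 16$; for $k\ge 17$ it remains open, and your sketch does not close it. The decisive gap is the one you name yourself: in the residual case, where the $(k+1)$-connected subgraph $H'$ found inside the dense bag has $\chi(H')\le k$, you need to colour all of $H$ with at most $k$ colours compatibly with its already-coloured adhesion set, and no lemma of that kind is available; without it the inflated-adhesion penalty and the residual-bag penalty cannot be made to cancel, which is precisely why the problem is open.

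One further correction of emphasis. The "established approach behind the bound above" is not the tree-decomposition recursion you describe. The paper's mechanism is the template-inextensibility method: one passes to a minimally inextensible induced subgraph, uses cost-bounded templates to force $(k+1)$-connectivity (\cref{lem:kappa}), and then the constant $3+\frac{1}{16}$ emerges from a purely colour-counting optimisation in the final colouring step of \cref{lem:chi} (the inequality \eqref{eq:chi8} and the choice $d\ge\frac{1}{16}k$), not from trading separator sizes against residual bags along a decomposition tree. Moreover, the paper notes that a loss of $2k-2$ in the chromatic number is intrinsic to this method (the bound $\abs{\mac C}-2k+3$ in \cref{lem:chi} is optimal), so even a perfected version of the template argument would at best reach the constant $3$ in \cref{prob:main}, and your hoped-for cancellation would have to come from genuinely new ideas, e.g.\ a structural colouring lemma of the kind you describe, which neither your sketch nor the paper supplies.
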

	
	For integers $k\ge1$ and $m\ge2$,
	let $g(k,m)$ be the smallest integer $n\ge1$ such that every graph with chromatic number at least $n$ contains a $(k+1)$-connected subgraph with chromatic number at least $m$.
	Thus $g(k,2)$ is simply the least integer $n\ge1$ such that every graph with chromatic number at least $n$ contains a $(k+1)$-connected subgraph,
	$g(1,m)=\max(m,3)$ for all $m\ge2$,
	and~\cref{prob:main} says $g(k,k+1)\le 3k+1$ for all $k\ge1$.
	Here are the known estimates on $g(k,m)$ for $k,m\ge2$.
	\begin{itemize}
		\item Alon, Kleitman, Saks, Seymour, and Thomassen~\cite{MR902713}, seeking for a remedy for the incorrect proof of~\cref{prob:main} in~\cite{MR698708}, 
		initiated the study of $g(k,m)$ and proved that\footnote{The authors of~\cite{MR902713} only stated that $g(k,m)\ge m+k-1$, but their lower bound construction in fact shows that $g(k,m)\ge\max(m+k-1,2k+1)$.} 
		\[\max(m+k-1,2k+1)\le g(k,m)\le\max(m+10k^2,100k^3+1).\]
		
		\item Partly motivated by the study of \emph{$\chi$-boundedness}
		(see~\cite{MR4174126} for a survey on this topic and~\cite{nss2023} for a recent application),
		Chudnovsky, Penev, Scott, and Trotignon~\cite{MR3096332} proved (among other things) that 
		\[g(k,m)\le \max(m+2k^2,2k^2+k+1)\]
		and Penev, Thomass\'{e}, and Trotignon~\cite{MR3479694} proved that 
		\[g(k,m)\le \max(m+2k-2,2k^2+1).\]
		
		\item Motivated by recent progress on Hadwiger's conjecture~\cite{delcourt2021reducing},
		Gir\~{a}o and Narayanan~\cite{MR4453744} proved that $g(k-1,k)\le 7k+1$;
		in fact their argument can be slightly modified to get $g(k-1,k)\le 4k$
		(see \cref{prop:4k-1}).
	\end{itemize}
	
	A classical result of Mader~\cite{MR306050} (see also~\cite[Theorem 1.4.3]{MR3822066}) says that for every $k\ge1$, 
	every graph with average degree at least $4k$ contains a $(k+1)$-connected subgraph with more than $2k$ vertices.
	This leads to two natural questions:
	\begin{itemize}
		\item What is the smallest constant $C>0$ such that every graph with average degree at least $Ck$ contains a $(k+1)$-connected subgraph with more than $2k$ vertices?
		Carmesin~\cite{carmesin2020large} recently showed that $C=3+\frac{1}{3}$ is the correct answer.
		
		\item What if we just ask for the $(k+1)$-connectivity without any demands on the number of vertices of the subgraph?
		Mader~\cite{MR561307}
		conjectured that every graph with average degree at least $3k-1$ contains a $(k+1)$-connected subgraph;
		and the current record on this problem is held by Bernshteyn and Kostochka~\cite{MR3431381} who proved that $(3+\frac{1}{6})k$ suffices as long as the host graph has at least $\frac{5}{2}k$ vertices.
	\end{itemize}
	Our main result, which can be considered as an analogue of Carmesin's theorem for the chromatic number in some sense, 
	improves all of the aforementioned upper bounds on $g(k,m)$.
	It shows that 
	\[ g(k,m)\le\max\left(m+2k-2,\ceil*{\left(3+\frac{1}{16}\right)k}\right)\]
	for all $k\ge1$ and $m\ge2$,
	approaching the lower bound $g(k,m)\ge\max(m+k-1,2k+1)$;
	and consequently,~\cref{prob:main} is true with $3$ replaced by $3+\frac{1}{16}$.
	\begin{theorem}
		\label{thm:main}
		For every integer $k\ge1$,
		every graph $G$ with $\chi(G)\ge(3+\frac{1}{16})k$ contains a $(k+1)$-connected subgraph with more than $\chi(G)-k$ vertices and chromatic number at least $\chi(G)-2k+2$.
	\end{theorem}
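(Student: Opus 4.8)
The plan is to argue by induction on $|V(G)|$ with $k$ fixed; so suppose the statement holds for all graphs on fewer vertices, and let $t:=\chi(G)\ge(3+\tfrac1{16})k$. If $G$ is not vertex-critical, choose $v$ with $\chi(G-v)=t$ and apply the inductive hypothesis to $G-v$; the subgraph it produces already has the right size and chromatic number and lies in $G$. So we may assume $G$ is $t$-vertex-critical, and hence $\delta(G)\ge t-1$ and (as $t\ge4$) $G$ is $2$-connected. If $G$ itself is $(k+1)$-connected we are done with $H=G$, since $|V(G)|\ge t>t-k$ and $\chi(G)=t\ge t-2k+2$. Otherwise fix a cutset $X$ of minimum size, so $2\le|X|\le k$; let $C$ be a component of $G-X$ of smallest order and $D$ another component. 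As every vertex of a component sends all of its $\ge t-1$ neighbours into that component together with $X$, we have $|C|,|D|\ge t-|X|\ge t-k$.

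The key step is to replace the large side $G-C$ by a bounded substitute glued along $X$. Since $G$ is $t$-critical, $G-C$ has a proper $(t-1)$-colouring, and its restriction to $X$ gives a partition $\mathcal P$ of $X$ into at most $|X|$ classes; crucially, $\mathcal P$ is \emph{not} the class-partition on $X$ of any $(t-1)$-colouring of $G[C\cup X]$, for otherwise the two colourings would agree on $X$ after a colour permutation and glue into a $(t-1)$-colouring of $G$. The plan is to build, from $\mathcal P$ and the local structure of $G[C\cup X]$ near $X$, a graph $Q$ whose vertex set is $X$ together with at most $t-k-|X|$ further vertices and which satisfies: (i) $Q[X]=G[X]$; (ii) $V(Q)\cap V(G[C\cup X])=X$; and (iii) the class-partition on $X$ of every proper $(t-1)$-colouring of $Q$ fails to be realised by a $(t-1)$-colouring of $G[C\cup X]$. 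Set $\hat G:=G[C\cup X]\cup Q$. Any proper $(t-1)$-colouring of $\hat G$ would restrict to $(t-1)$-colourings of $Q$ and of $G[C\cup X]$ inducing the \emph{same} partition on $X$, contradicting (iii); hence $\chi(\hat G)\ge t\ge(3+\tfrac1{16})k$. Moreover $|V(\hat G)|\le|C|+|X|+(t-k-|X|)=|C|+(t-k)\le|C|+|D|\le|V(G)|-|X|<|V(G)|$, so the inductive hypothesis applies to $\hat G$.

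This yields a $(k+1)$-connected $H\subseteq\hat G$ with $|V(H)|>\chi(\hat G)-k\ge t-k$ and $\chi(H)\ge\chi(\hat G)-2k+2\ge t-2k+2$. By (ii), $X$ is a cutset of $\hat G$ of size at most $k$, so the $(k+1)$-connected graph $H$ cannot meet both sides of $X$; since $|V(Q)|\le t-k<|V(H)|$ it cannot be contained in $Q$, so $V(H)\subseteq C\cup X$, and then by (i), $H$ is a subgraph of $\hat G[C\cup X]=G[C\cup X]\subseteq G$. Thus $H$ is $(k+1)$-connected in $G$ with more than $\chi(G)-k$ vertices and chromatic number at least $\chi(G)-2k+2$, as required.

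I expect the construction of $Q$ obeying (i)--(iii) to be the crux. Forcing the exact partition $\mathcal P$ on $X$ by a clique gadget would cost about $t$ new vertices, far above the budget $t-k-|X|\approx k$, and completing $X$ to a clique is forbidden by (i); so one must certify the colouring obstruction across $X$ much more economically, presumably exploiting that the low vertices of the $t$-critical graph $G$ induce Gallai trees, together with the bound $|C|\ge t-k$ and a careful edge count, and after first selecting $X$ and $C$ judiciously. This is where the extremal constant is pinned down: a perfectly efficient substitute would give the threshold $3k$, and the small slack one has to leave in the bookkeeping — a few spare colours carried through the recursion to absorb the imperfection of $Q$ — is what degrades $3$ to $3+\tfrac1{16}$. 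Granting the theorem, the stated bound on $g(k,m)$, and hence the weakened form of Thomassen's conjecture, follows at once by applying it to a subgraph of chromatic number exactly $\lceil(3+\tfrac1{16})k\rceil$.
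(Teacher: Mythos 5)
Your reduction to a $\chi(G)$-critical graph, the choice of a minimum cutset $X$ with $|X|\le k$, and the endgame (once $\hat G$ and $H$ exist, $H$ must avoid $Q$ and hence sit inside $G[C\cup X]$) are all fine, but the step you yourself flag as the crux --- the existence of a gadget $Q$ with properties (i)--(iii) on at most $t-k-|X|$ new vertices --- is not just unproven, it is false in general, so the whole plan collapses there. The obstruction is simple: any graph $Q$ with $|V(Q)|\le t-k\le t-1$ admits the rainbow proper $(t-1)$-colouring in which all vertices get distinct colours, so the discrete partition of $X$ is always realised by $Q$; hence (iii) can only hold if \emph{no} $(t-1)$-colouring of $G[C\cup X]$ separates all of $X$, which you cannot arrange. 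Concretely, take $k=2$ and a $t$-critical $G$ with a $2$-cut $X=\{u,v\}$ (such graphs exist for every $t$; $u,v$ are then nonadjacent). By the standard decomposition of critical graphs with a $2$-cut (the Bondy--Murty Theorem 14.9 fact the paper also uses), one side realises exactly the partition ``$u,v$ same colour'' and the other exactly ``$u,v$ different colours''. If $G[C\cup X]$ is the side forcing different colours, then (iii) demands that every $(t-1)$-colouring of $Q$ give $u,v$ the same colour --- impossible by the rainbow colouring. Swapping which side you replace does not help: forcing $c(u)\ne c(v)$ in every $(t-1)$-colouring of a $Q$ with $uv\notin E(Q)$ requires $\chi(Q/uv)\ge t$, hence at least $t+1$ vertices, again far over any budget compatible with $|V(H)|>t-k>|V(Q)|$. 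So no small \emph{graph} gadget can emulate the colouring constraint transmitted across the cut, and no amount of Gallai-tree or edge-counting refinement fixes this; it also means the constant $3+\tfrac1{16}$ cannot emerge from this scheme, since your sketch contains no quantitative mechanism producing it.

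This impossibility is exactly why the paper does not replace one side of a cutset by a subgraph at all. Instead it encodes the transmitted constraint non-graphically: it fixes a palette $\mathcal C$ of $\chi(G)-1$ colours and works with \emph{templates} $(S,c,F)$ (some vertices precoloured, others carrying lists of forbidden colours) together with a cost function $\cost_k(T)=k|S|+\sum_v|F(v)|<2k^2$; ``inextensible'' means no proper colouring respects such a template. One passes to a minimally inextensible induced subgraph $H$ and proves two separate lemmas: $H$ is $(k+1)$-connected with more than $|\mathcal C|-k+1$ vertices whenever $|\mathcal C|\ge 3k-1$ (here cutsets are handled by splitting the template across the cut, which is where your gadget would have been needed), and $\chi(H)\ge|\mathcal C|-2k+3$ whenever $|\mathcal C|\ge(3+\tfrac1{16})k-1$, the latter via a delicate recolouring argument over the colour classes (fit sequences, jumps, landmarks), and the $\tfrac1{16}$ comes out of an explicit optimization in its last step. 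You would need to abandon the ``replace a side by a small subgraph'' idea and carry constraints in some such non-graph form for your induction to have a chance.
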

	We suspect that the constant factor $3+\frac{1}{16}$ in~\cref{thm:main}
	can be replaced by $3$,
	which would verify~\cref{prob:main}.
	\section{Templates and inextensibility}
	\label{sec:prelim}
	The proof of~\Cref{thm:main} employs the \dd template-inextensibility\ee~method first appeared in~\cite{MR3096332} (under the name \dd coloring constraints\ee) 
	and developed further in ~\cite{MR4453744},
	with a number of modifications.
	Given sets $A\subset B$ and a map $f$ with domain $B$, let $f\vert_A$ be the restriction of $f$ to $A$, and let $f(A):=\{f(a):a\in A\}$.
	For a graph $G$, let $\abs{G}$ denote the number of vertices of $G$. 
	For $S\subset V(G)$, let $G[S]$ denote the subgraph of $G$ with vertex set $S$ and edges whose endpoints are in $S$;
	and a graph $H$ is an \emph{induced subgraph} of $G$ if there is $S\subset V(G)$ with $H=G[S]$,
	and $H$ is a \emph{proper} induced subgraph of $G$ if $\abs{H}<\abs{G}$.	
	
	For a finite set $\mac C$ of colors,
	a \emph{proper $\mac C$-coloring} of $G$ is a function $f\colon V(G)\to\mac C$
	satisfying $f(u)\ne f(v)$ for all $u,v$ adjacent vertices of $G$;
	thus $\chi(G)$ is the least integer $m\ge0$ such that there is a proper $\mac C$-coloring of $G$ with $\abs{\mac C}=m$.
	A \emph{$\mac C$-template on $G$} is a triple $T=(S,c,F)$ where 
	\begin{itemize}
		\item $S$ is a subset of $V(G)$; 
		
		\item $c\colon S\to\mac{C}$ is a proper $\mac C$-coloring of $G[S]$; and 
		
		\item $F$ is a map from $V(G)\setminus S$ to
		the family of all subsets of $\mac{C}$.
	\end{itemize}
	It might be helpful to think of the vertices of $S$ as the \emph{precolored} vertices,
	and each $v\in V(G)\setminus S$ as an \emph{uncolored} vertex with a list of \emph{forbidden colors} specified by $F(v)$.
	
	For an integer $k\ge1$, given a $\mac C$-template $T=(S,c,F)$ on $G$, define the \emph{$k$-cost} of $T$ by
	\[\cost_k(T):=k\abs{S}+\sum_{v\in V(G)\setminus S}\abs{F(v)}.\]
	Every $A\subset V(G)$ naturally gives rise to the $\mac C$-template $T_A:=(S\cap A,c\vert_{S\cap A},F\vert_{A\setminus S})$ on $G[A]$.
	Note that $\cost_k$ is additive under disjoint unions, that is, $\cost_k(T_{A\cup B})=\cost_k(T_A)+\cost_k(T_B)$ for all disjoint $A,B\subset V(G)$.
	
	A proper $\mac C$-coloring $f$ of $G$ is said to \emph{respect} $T$
	if $f\vert_S=c$ and $f(v)\in\mac{C}\setminus F(v)$ for all $v\in V(G)\setminus S$.
	Say that $G$ is \emph{$\mac C$-inextensible} if there exists a $\mac C$-template $T=(S,c,F)$ on $G$ such that
	\begin{itemize}
		\item $\cost_k(T)<2k^2$;
		
		\item $\abs{F(v)}\le k$ for all $v\in V(G)\setminus S$; and
		
		\item there is no proper $\mac C$-coloring of $G$ respecting $T$.
	\end{itemize}
	In this case, say that $T$ \emph{witnesses} the $\mac C$-inextensibility of $G$;
	note that $\abs{S}<2k$.
	Observe that if $\chi(G)>0$, then there exists $\mac C$ so that $G$ is $\mac C$-inextensible:
	indeed, such a $\mac C$ can be chosen to be any set of $\chi(G)-1$ colors,
	then the $\mac C$-inextensibility of $G$ is witnessed by the empty $\mac C$-template with no precolored vertices
	and no forbidden colors at each uncolored vertex.
	Say that $G$ is \emph{$\mac C$-extensible} if it is not $\mac C$-inextensible.
	
	In what follows, when there is no danger of ambiguity, we drop the prefix $\mac C$ from the notions of proper $\mac C$-colorings, $\mac C$-templates, and $\mac C$-inextensibility.
	We also drop the prefix $k$ from the notion of $k$-costs, and drop the subscript $k$ from the notation $\cost_k$.
	\section{Connectivity}
	\label{sec:conn}
	We wish to show that,
	given a set $\mac C$ of colors, if $\abs{\mac C}$ is sufficiently large, then every inextensible graph contains a $(k+1)$-connected subgraph.
	Here is an example showing that $\abs{\mac C}\ge3k-1$ is necessary:
	if $\abs{\mac C}=3k-2$,
	let $G$ be a star\footnote{A \emph{star} is a complete bipartite graph with one part having only one vertex (called the \emph{center}); and the vertices in the other part are called the \emph{leaves}.
		If both parts have size one each then the center can be either one of the two vertices.} with $2k$ vertices,
	and consider a template on $G$ where the leaves are precolored by different colors and the center has $k-1$ forbidden colors which are not used for the leaves;
	then this template witnesses the inextensibility of $G$ while $G$ certainly has no $2$-connected subgraphs.
	It turns out that $\abs{\mac C}\ge3k-1$ is also sufficient.
	To see this, let us say that a template $T=(S,c,F)$ on an inextensible graph $G$ is \emph{good}
	if $T$ witnesses the inextensibility of $G$ and $\abs{F(v)}\le k-1$ for all $v\in V(G)\setminus S$,
	that is, each uncolored vertex has fewer than $k$ forbidden colors.
	The following lemma says that every inextensible graph has a good template as long as $\abs{\mac C}\ge3k-1$.
	\begin{lemma}
		\label{lem:good}
		If $\abs{\mac{C}}\ge3k-1$ and $G$ is inextensible,
		then there is a good template on $G$.
	\end{lemma}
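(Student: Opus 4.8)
The plan is to take a witnessing template $T=(S,c,F)$ for the inextensibility of $G$ with $\abs{S}$ as large as possible (such a $T$ exists since there are only finitely many $\mac C$-templates on $G$ and at least one witnesses inextensibility), and to argue that any such $T$ is automatically good. Suppose not; then some uncoloured vertex $v\in V(G)\setminus S$ has $\abs{F(v)}=k$ (recall $\abs{F(v)}\le k$ always). I would then promote $v$ to a precoloured vertex with a carefully chosen colour $\gamma\in\mac C$, producing a new witnessing template with a strictly larger precoloured set and contradicting the maximality of $\abs{S}$.

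The point of choosing $v$ with $\abs{F(v)}=k$ is that promoting it is cost-neutral: the term $\abs{F(v)}=k$ that $v$ contributed to $\sum_{w\in V(G)\setminus S}\abs{F(w)}$ is replaced by the term $k$ that $v$ now contributes to $k\abs{S}$. So for every $\gamma\in\mac C$ the template $T^{\ast}=(S\cup\{v\},c^{\ast},F\vert_{V(G)\setminus(S\cup\{v\})})$, where $c^{\ast}\vert_S=c$ and $c^{\ast}(v)=\gamma$, satisfies $\cost_k(T^{\ast})=\cost_k(T)<2k^2$ and still has all forbidden lists of size at most $k$. It remains to pick $\gamma$ so that (i) $c^{\ast}$ is a proper colouring of $G[S\cup\{v\}]$ and (ii) no proper colouring of $G$ respects $T^{\ast}$. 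For (ii) it suffices that $\gamma\notin F(v)$: any proper colouring $f$ respecting $T^{\ast}$ has $f\vert_S=c$, $f(v)=\gamma\notin F(v)$, and $f(w)\notin F(w)$ for every other uncoloured $w$, hence respects $T$, which is impossible. For (i) it suffices that $\gamma\ne c(u)$ for every neighbour $u$ of $v$ in $S$. So $\gamma$ must avoid the colours in $F(v)$ together with the colours on the neighbours of $v$ in $S$: at most $k+\abs{S}$ colours in total.

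The one genuinely useful observation — and the reason $\abs{\mac C}\ge 3k-1$ suffices whereas the star example in the text shows $3k-2$ does not — is that the presence of a \emph{saturated} uncoloured vertex forces $\abs{S}$ strictly below the generic bound $\abs{S}\le 2k-1$. Indeed, $2k^2>\cost_k(T)=k\abs{S}+\sum_{w\in V(G)\setminus S}\abs{F(w)}\ge k\abs{S}+\abs{F(v)}=k\abs{S}+k$, so $\abs{S}\le 2k-2$. Hence $v$ has at most $2k-2$ neighbours in $S$, the forbidden set for $\gamma$ has size at most $(2k-2)+k=3k-2<3k-1\le\abs{\mac C}$, and a valid $\gamma$ exists. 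Promoting $v$ with this $\gamma$ yields a witnessing template with precoloured set $S\cup\{v\}$, the desired contradiction.

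I do not anticipate a serious obstacle here; the only thing to get right is this counting step, which shows a saturated vertex improves the $\abs{S}$ bound by exactly one — precisely enough to find a free colour among $3k-1$. Equivalently, and perhaps more transparently, the whole argument can be phrased dynamically: starting from any witnessing template, repeatedly promote a saturated uncoloured vertex as above; each step preserves $\cost_k$, the bound $\abs{F}\le k$, and the witnessing property while increasing $\abs{S}$, so (since $\abs{S}<2k$ throughout) the process halts at a template with every uncoloured forbidden list of size at most $k-1$, i.e. a good template.
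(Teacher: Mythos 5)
Your proposal is correct and follows essentially the same argument as the paper: take a witnessing template with $\abs{S}$ maximal, note that a saturated vertex forces $\abs{S}\le 2k-2$ via the cost bound, and promote it to a precoloured vertex with a colour outside $F(v)$ and the colours already used on $S$ (the paper avoids all of $c(S)$ where you avoid only the neighbours' colours, an immaterial refinement), contradicting maximality. Your explicit verification that the promoted template still witnesses inextensibility is a detail the paper leaves implicit, but the proofs coincide in substance.
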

	\begin{proof}
		Since $G$ is inextensible, there exists a template $T=(S,c,F)$ witnessing its inextensibility;
		choose $T$ with $\abs{S}$ maximal.
		Suppose there is $v\in V(G)\setminus S$ with $\abs{F(v)}=k$;
		let $S':=S\cup\{v\}$.
		We have that
		\[2k^2-k\abs{S}>\cost(T)-k\abs{S}\ge\abs{F(v)}=k,\]
		and so $\abs{S}<2k-1$. 
		Because $\abs{\mac C}\ge3k-1$, it follows that
		(recall that $c(S)=\{c(u):u\in S\}$)
		\[\abs{\mac{C}\setminus(c(S)\cup F(v))}\ge\abs{\mac{C}}-\abs{S}-\abs{F(v)}
		>(3k-1)-(2k-1)-k=0,\]
		so $\mac{C}\setminus(c(S)\cup F(v))\ne\emptyset$.
		Let $T':=(S',c',F\vert_{V(G)\setminus S'})$ be a template on $G$ with $c'$ satisfying $c'\vert_S=c$ and $c'(v)\in \mac{C}\setminus(c(S)\cup F(v))$.
		Then
		\[\cost(T')=\cost(T)+k-\abs{F(v)}=\cost(T)<2k^2\]
		so $T'$ would witness the inextensibility of $G$ with $\abs{S'}>\abs{S}$, 
		contradicting the maximality of $\abs{S}$.
		Therefore $\abs{F(v)}\le k-1$ for all $v\in V(G)\setminus S$,
		that is, $T$ is good on $G$.
		This proves~\Cref{lem:good}.
	\end{proof}
	A graph is said to be \emph{minimally inextensible} if it is inextensible while its proper induced subgraphs are extensible.
	It is immediate that every inextensible graph contains a minimally inextensible induced subgraph;
	and as the following lemma shows, every minimally inextensible graph is $(k+1)$-connected as long as $\abs{\mac C}\ge3k-1$.
	This constitutes the connectivity part of~\Cref{thm:main}.
	\begin{lemma}
		\label{lem:kappa}
		If $\abs{\mac C}\ge3k-1$,
		then every minimally inextensible graph has more than $\abs{\mac C}-k+1$ vertices and is $(k+1)$-connected.
	\end{lemma}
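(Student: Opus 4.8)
The plan is to run the template‑inextensibility machinery and convert everything into a list‑colouring problem on $G-S$. First I would invoke \Cref{lem:good} to fix a good template $T=(S,c,F)$ witnessing the inextensibility of $G$, so $\abs{F(v)}\le k-1$ for every uncoloured $v$; since $k\abs{S}\le\cost(T)<2k^2$ we get $\abs{S}\le 2k-1$, and $S\ne V(G)$ (otherwise $c$ itself respects $T$). The colourings of $G$ respecting $T$ are exactly the proper colourings $f$ of $G-S$ with $f(v)\in L(v):=\mac C\setminus\bigl(F(v)\cup c(N_G(v)\cap S)\bigr)$ for all $v$, and from $\abs{\mac C}\ge 3k-1$, $\abs{F(v)}\le k-1$, $\abs{N_G(v)\cap S}\le\abs{S}\le 2k-1$ one gets the key inequality $\abs{L(v)}\ge\abs{\mac C}-\abs{F(v)}-\abs{N_G(v)\cap S}\ge 2k-\abs{N_G(v)\cap S}\ge 1$. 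For the vertex bound: if $\abs{G}\le\abs{\mac C}-k+1$, colour $V(G)\setminus S$ greedily in any order — the vertex $v$ being coloured must avoid at most $\abs{F(v)}+\abs{S}+(\abs{V(G)\setminus S}-1)\le(k-1)+(\abs{G}-1)=\abs{G}+k-2\le\abs{\mac C}-1<\abs{\mac C}$ colours, so the greedy colouring goes through and respects $T$, a contradiction. Hence $\abs{G}>\abs{\mac C}-k+1\ge 2k\ge k+1$, so in particular $\abs{G}>k+1$.

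Next I would record that $G-S$ is \emph{minimally} non‑$L$‑colourable: for each nonempty $U\subseteq V(G)\setminus S$ the graph $G[V(G)\setminus U]$ is a proper induced subgraph of $G$, hence extensible, and the obvious restricted template (cost $\le\cost(T)<2k^2$, forbidden sets $\le k-1$) is respected by a colouring which — since deleting vertices outside $S$ leaves every $N_G(v)\cap S$ unchanged — is a proper $L$‑colouring of $(G-S)-U$. Consequently $G-S$ is connected and $\deg_{G-S}(v)\ge\abs{L(v)}$ for all $v$; the connectivity of $G-S$ is what lets one control the cutset in the last step.

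For $(k+1)$‑connectivity, suppose $Z$ is a cutset with $\abs{Z}\le k$; pick a component $D$ of $G-Z$ and put $P=V(D)$, $Q=V(G)\setminus(V(D)\cup Z)$, $A=P\cup Z$, $B=Q\cup Z$, so $G[A]$ and $G[B]$ are proper induced subgraphs (hence extensible), $A\cap B=Z$, and $G$ has no $P$–$Q$ edge. If $Z\subseteq S$ this is immediate: the restricted templates $T_A,T_B$ are valid, extensibility yields colourings $f_A,f_B$ respecting them, both agree with $c$ on $Z$, so $f_A\cup f_B$ respects $T$ — contradiction. In general I would colour first the side of larger interior cost, say $\cost(T_P)\ge\cost(T_Q)$, using on $G[A]$ the template $T_A^{+}$ that precolours $S\cap A$ by $c$ and enlarges $F(v)$ to $F(v)\cup c(N_G(v)\cap S\cap Q)$ for each $v\in Z\setminus S$, so that a respecting colouring $f_A$ has $f_A(v)\in L(v)$ for every $v\in Z\setminus S$ — i.e.\ $f_A\vert_{Z}$ is compatible with $c$ across every edge of $G$ from $Z\setminus S$ into $S$. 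The point is the cost estimate, using additivity of $\cost$, the trivial bound on the number of edges between $Z\setminus S$ and $S\cap Q$, and $\abs{Z\setminus S}\le k$:
\[\cost(T_A^{+})\ \le\ \cost(T_P)+\cost(T_Z)+k\abs{S\cap Q}\ \le\ \cost(T_P)+\cost(T_Z)+\cost(T_Q)\ =\ \cost(T)\ <\ 2k^2 ,\]
so extensibility of $G[A]$ supplies $f_A$. Transferring $f_A\vert_{Z\setminus S}$ to $G[B]$ as a precolouring gives a legitimate template $T_B^{+}$ of cost $\cost(T_Q)+k\abs{Z}\le\tfrac12\bigl(\cost(T)-\cost(T_Z)\bigr)+k\abs{Z}<2k^2$; extensibility of $G[B]$ supplies $f_B$, it agrees with $f_A$ on $Z$, and $f_A\cup f_B$ respects $T$ — contradiction. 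Thus $G$ has no cutset of size $\le k$ and, with $\abs{G}>k+1$, is $(k+1)$‑connected.

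The step I expect to be the real obstacle is the claim ``$T_A^{+}$ is legitimate'': a vertex $v\in Z\setminus S$ with many precoloured neighbours on the $Q$‑side may get the enlarged forbidden set $F(v)\cup c(N_G(v)\cap S\cap Q)$ of size $>k$, in which case $T_A^{+}$ is not a valid template and extensibility of $G[A]$ cannot be invoked. Such a $v$ has a very small list $L(v)$ and must instead be absorbed into the precoloured part of the template, and it is exactly here that $\abs{\mac C}\ge 3k-1$ is used — through $\abs{L(v)}\ge 2k-\abs{N_G(v)\cap S}\ge 1$, which furnishes each such $v$ a legal colour — and where one must argue, via a careful choice of the cutset $Z$, of the component $D$, and of which side to colour first, together with the minimal non‑colourability of $G-S$, that these extra absorptions still keep both side‑templates under the budget $2k^2$. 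Making this accounting close is the technical heart of the lemma; the remaining case analysis (e.g.\ when one side of $G-Z$ lies entirely in $S$, versus when $Z\setminus S$ separates $G-S$) is then routine bookkeeping with $\cost(\cdot)$.
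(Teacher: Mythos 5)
Your vertex-count argument is fine (the greedy colouring of $V(G)\setminus S$ under the assumption $\abs{G}\le\abs{\mac C}-k+1$ is a legitimate, even simpler, route to $\abs{G}>\abs{\mac C}-k+1$), but the connectivity half has a genuine gap, and it is exactly the one you flag yourself: the modified template $T_A^{+}$ need not be a template that extensibility can see. For $v\in Z\setminus S$ the enlarged set $F(v)\cup c(N_G(v)\cap S\cap Q)$ can have size up to $(k-1)+\abs{S\cap Q}$, i.e.\ close to $3k-2$, whereas extensibility of $G[A]$ only guarantees a respecting colouring for templates whose forbidden sets all have size at most $k$ (and cost $<2k^2$). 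Your cost estimate $\cost(T_A^{+})\le\cost(T)<2k^2$ is correct but irrelevant to this obstruction, and the proposed repair (``absorb'' such vertices into the precoloured part, with a careful choice of $Z$, $D$ and the colouring order) is not carried out; since such an absorbed vertex must simultaneously be given a colour compatible with $c$ on both sides of the cut and charged $k$ to the budget, the accounting is precisely what is missing. As written, the proof of $(k+1)$-connectivity does not close.

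The paper's proof avoids pushing any colour information into forbidden lists across the cut. With a cutset $X$, $\abs X\le k$, and parts $A,B$, additivity of $\cost$ lets one choose $B$ with $\cost(T_B)<k^2$; one then colours first the piece $D:=A\cup X\cup S$, which already contains \emph{all} of $S$, so its restricted template $T_D$ is untouched and automatically legitimate. The only thing to check is that $G[D]$ is a proper induced subgraph, i.e.\ $B\not\subset S$, and this is where goodness is used: one first proves that every vertex $u\in S$ has more than $k$ neighbours in $V(G)\setminus S$ (delete $u$, add $c(u)$ to the forbidden lists of its uncoloured neighbours --- sizes stay $\le k$ because $T$ is good, cost does not increase --- and invoke extensibility of $G\setminus u$). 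Then the colouring of $D$ is transferred to the other side not via forbidden lists but as \emph{precoloured} vertices: the template on $G[B\cup X]$ precolours $X\cup(B\cap S)$ and has cost $k\abs X+\cost(T_B)<k^2+k^2=2k^2$, so extensibility of $G[B\cup X]$ applies, and gluing the two colourings along $X\cup(B\cap S)$ contradicts inextensibility. These two devices --- the degree claim for vertices of $S$ and the choice $D=A\cup X\cup S$ together with ``precolour the cutset on the cheap side'' --- are the missing ingredients that make your outline into a proof.
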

	\begin{proof}
		Let $G$ be a minimally inextensible graph.
		By~\cref{lem:kappa}, there is a good template $T=(S,c,F)$ on $G$.
		\begin{claim}
			\label{claim:kappa1}
			Every vertex in $S$ has more than $k$ neighbors in $V(G)\setminus S$.
		\end{claim}
		\begin{subproof}
			Let $u\in S$, and let $M$ be the set of neighbors of $u$ in $V(G)\setminus S$.
			Let $T':=(S\setminus\{u\},c\vert_{S\setminus\{u\}},F')$ be the template on $G\setminus u$ with $F'$ defined by
			\begin{itemize}
				\item $F'(v):=F(v)$ for all $v\in V(G)\setminus (S\cup M)$; and
				
				\item $F'(v):=F(v)\cup\{c(u)\}$
				for all $v\in M$.
			\end{itemize} 
			As $\abs{F(v)}\le k-1$ for all $v\in V(G)\setminus S$,
			we have that $\abs{F'(v)}\le k$ for all $v\in V(G)\setminus S$.
			Observe that
			\begin{align*}
				\cost(T')
				&\le k\abs{S\setminus\{u\}}
				+\sum_{v\in V(G)\setminus(S\cup M)}\abs{F(v)}
				+\sum_{v\in M}(\abs{F(v)}+1)\\
				&= k\abs{S}-k+\sum_{v\in V(G)\setminus S}\abs{F(v)}+\abs{M}
				= \cost(T)+\abs{M}-k.
			\end{align*}
			If $\abs{M}\le k$, then $\cost(T')\le\cost(T)<2k^2$;
			so the extensibility of $G\setminus u$ would give a proper coloring of $G\setminus u$ respecting $T'$
			and so a proper coloring of $G$ respecting $T$,
			a contradiction.
			This shows that $\abs{M}>k$, as required.
			This proves~\cref{claim:kappa1}.
		\end{subproof}
		\begin{claim}
			\label{claim:kappa2}
			Every vertex in $V(G)\setminus S$ has more than $\abs{\mac C}-k$ neighbors in $G$.
		\end{claim}
		\begin{subproof}
			Let $v\in V(G)\setminus S$, and let $N$ be the set of neighbors of $v$ in $G$.
			The extensibility of $G\setminus v$ yields a proper coloring $c'$ of $G\setminus v$ respecting $T_{V(G)\setminus\{v\}}$.
			If $\abs{N}\le\abs{\mac{C}}-k$, then by the goodness of $T$
			\[\abs{\mac{C}\setminus(c'(N)\cup F(v))}
			\ge\abs{\mac{C}}-\abs{N}-\abs{F(v)}
			>\abs{\mac{C}}-(\abs{\mac{C}}-k)-k
			=0 \]
			so there would be a proper coloring of $G$ respecting $T$, a contradiction.
			This proves~\cref{claim:kappa2}.
		\end{subproof}
		Now,
		~\cref{claim:kappa1,claim:kappa2} together imply that $\abs{G}>\abs{\mac{C}}-k+1\ge2k\ge k+1$.
		Next, suppose that $G$ is not $(k+1)$-connected; then there would be a cutset $X$ of $G$ with $\abs{X}\le k$ and with disjoint nonempty sets of vertices $A,B$ with $A\cup B=V(G)\setminus X$
		and no edges between them.
		Since $\cost$ is additive under disjoint unions, we have $\cost(T_A)+\cost(T_B)\le\cost(T)<2k^2$;
		and so we may assume $\cost(T_B)<k^2$.
		Let $D:=A\cup X\cup S$; then $\abs{D}<\abs{G}$
		since $B\not\subset S$ by~\cref{claim:kappa1}.
		Thus $G[D]$ is extensible,
		and so has a proper coloring $c'$ respecting~$T_D$.
		Let $T':=(S',c'',F\vert_{B\setminus S})$ be the template on $G[B\cup X]$
		with $S':=X\cup(B\cap S)$ and $c''$ defined by $c''\vert_X:=c'\vert_X$ and $c''\vert_{B\cap S}:=c\vert_{B\cap S}$;
		note that $c''$ is a proper coloring of $G[S']$.
		Since
		\[\cost(T')=k\abs{X\cup(B\cap S)}
		+\sum_{v\in B\setminus S}\abs{F(v)}
		= k\abs{X}+\cost(T_B)
		<k^2+k^2
		=2k^2,\]
		and since $G[B\cup X]$ is extensible,
		it has a proper coloring $c'''$ respecting $T'$.
		As $c'\vert_X=c''\vert_X=c'''\vert_X$, 
		gluing $c'$ and $c'''$ would give a proper coloring of $G$ respecting $T$,
		a contradiction.
		This proves~\cref{lem:kappa}.
	\end{proof}
	\section{Chromatic number}
	\label{sec:chi}
	This section deals with the chromatic part of~\Cref{thm:main}.
	We aim to prove that if $\abs{\mac C}$ is sufficiently large then every inextensible graph has chromatic number as large as desired.
	To do so, we prove that if an inextensible graph $G$ has small chromatic number,
	then we can find a proper coloring of $G$ respecting a good template on $G$
	(this is similar to the approach in the proof of~\cref{lem:kappa}).
	For an integer $n\ge0$,
	let $[n]$ be $\{1,2,\ldots,n\}$ if $n\ge1$ and $\emptyset$ if $n=0$.
	Here is the main result of this section.
	\begin{lemma}
		\label{lem:chi}
		If $\abs{\mac C}\ge(3+\frac{1}{16})k-1$, then every inextensible graph $G$ satisfies $\chi(G)\ge\abs{\mac C}-2k+3$.
	\end{lemma}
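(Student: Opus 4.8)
The plan is to argue by contradiction: suppose $G$ is inextensible but $\chi(G)\le|\mac{C}|-2k+2$. Since $|\mac{C}|\ge 3k-1$, \cref{lem:good} supplies a good template $T=(S,c,F)$ on $G$; recall that then $\cost(T)=k|S|+\sum_{v\notin S}|F(v)|<2k^2$, every $|F(v)|\le k-1$, and hence $|S|\le 2k-1$ and $\sum_{v\notin S}|F(v)|<2k^2-k|S|=:\beta$. I would then exhibit a proper $\mac{C}$-colouring of $G$ respecting $T$, contradicting that $T$ witnesses inextensibility.

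First I would fix a minimum proper colouring of $G-S:=G[V(G)\setminus S]$, with colour classes $C_1,\dots,C_t$; so $t=\chi(G-S)\le\chi(G)\le|\mac{C}|-2k+2$, and minimality forces an edge of $G$ between $C_i$ and $C_j$ for all $i\ne j$, i.e.\ the quotient graph with one vertex per class is $K_t$. The aim is to colour each class with a single colour: an injective $\gamma\colon\{C_1,\dots,C_t\}\to\mac{C}$ produces the proper colouring $v\mapsto\gamma(C_i)$ of $G-S$ (for $v\in C_i$), and this together with $c$ on $S$ respects $T$ provided $\gamma(C_i)\notin F(C_i):=\bigcup_{v\in C_i}F(v)$ and $\gamma(C_i)\notin c(N(C_i)\cap S)$ for every $i$. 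Such a $\gamma$ is exactly a system of distinct representatives for $\bigl(\mac{C}\setminus(F(C_i)\cup c(N(C_i)\cap S))\bigr)_i$, which by Hall's theorem exists once $\bigl|\bigcap_{i\in I}\bigl(F(C_i)\cup c(N(C_i)\cap S)\bigr)\bigr|\le|\mac{C}|-|I|$ for every $I\subseteq\{1,\dots,t\}$. Since $c(N(C_i)\cap S)\subseteq c(S)$ has size $\le|S|$, and a colour lying in $F(C_i)$ for $|I|$ distinct classes is forbidden by at least $|I|$ distinct vertices (so there are fewer than $\beta/|I|$ such colours), the Hall condition reduces to an inequality of the shape $|I|+\min(\max_i|F(C_i)|,\ \beta/|I|)\le|\mac{C}|-|S|$.

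The obstruction is a heavy class $C_i$ with $|F(C_i)|$ near $\beta$, which by itself can forbid almost every colour. To neutralise it I would fix a threshold $\theta$ of order $2k$, call $C_i$ heavy if $\beta_i:=\sum_{v\in C_i}|F(v)|>\theta$, and split each heavy class into independent sub-classes (blocks) each of $F$-weight $\le\theta$, sweeping all zero-weight vertices of the class into one block; since every $|F(v)|\le k-1$, a greedy packing yields at most $1+\beta_i/(\theta-k+1)$ blocks, so one obtains a partition of $V(G)\setminus S$ into $t'$ independent sets with $t'\le t+\beta/(\theta-k+1)$, each forbidding $\le\theta$ colours. Rerunning the SDR argument over these $t'$ parts, the Hall condition becomes $|I|+\min(\theta,\beta/|I|)\le|\mac{C}|-|S|$ for all $I$ with $|I|\le t'$; its binding instances are $\theta+\beta/\theta\le|\mac{C}|-|S|$ (from $|I|\approx\beta/\theta$) and $t'+\beta/t'\le|\mac{C}|-|S|$ (from taking $I$ to be the whole index set), together with $t'\le|\mac{C}|$ so that the $t'$ representatives can be distinct. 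Feeding in $t\le|\mac{C}|-2k+2$ and $\beta\le 2k^2-k|S|$, then doing a careful version of these estimates and optimising $\theta$ over its admissible range, one obtains that all constraints hold simultaneously once $|\mac{C}|\ge(3+\tfrac1{16})k-1$; the excess $\tfrac1{16}k$ over the naive $3k$ is precisely the slack lost in the packing bound and in the all-parts case of Hall. An alternative I would keep in reserve, instead of splitting, is to choose $\gamma$ on the heavy classes so as to minimise the number of vertices receiving a forbidden colour (again few, by the $\beta$-budget) and then recolour those vertices using the colours that $\gamma$ leaves unused.

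The delicate point — and where I expect the real work — is the regime in which $\chi(G-S)$ is close to its maximum $|\mac{C}|-2k+2$, equivalently $t'$ close to $|\mac{C}|$: then almost every colour is used by some part, the Hall condition for the full index set is razor-thin, and one must push the number of heavy blocks down as far as possible, which is exactly what pins the constant and dictates the precise choice of $\theta$. If one first passes to a minimally inextensible induced subgraph (legitimate, since $\chi$ can only drop and such a subgraph always exists), \cref{lem:kappa} additionally gives that $G$ is $(k+1)$-connected and every vertex outside $S$ has more than $|\mac{C}|-k$ neighbours, which constrains how the classes interact and is likely useful in this tight regime. Once $\gamma$ is in hand, colouring each vertex by the colour of its part and $S$ by $c$ is the promised proper colouring of $G$ respecting $T$, and the contradiction follows.
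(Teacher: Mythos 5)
There is a genuine gap, and it sits exactly in the tight regime you flag at the end. Your scheme makes every colour class of a minimum colouring of $G-S$ (after splitting heavy classes) monochromatic, with all parts receiving \emph{distinct} colours, each part avoiding $F(C_i)\cup c(N(C_i)\cap S)$. Nothing in the data you allow yourself excludes the configuration where $c$ is injective on $S$ with $\abs{S}=2k-1$, $t=\chi(G-S)=\abs{\mac C}-2k+2$ (permitted, since you only know $t\le\chi(G)\le\abs{\mac C}-2k+2$), and every class has neighbours of every colour of $c(S)$; then each set $\mac C\setminus\bigl(F(C_i)\cup c(N(C_i)\cap S)\bigr)$ is contained in $\mac C\setminus c(S)$, which has only $\abs{\mac C}-(2k-1)=t-1$ colours, so no system of distinct representatives exists at all --- Hall fails for the full index set no matter how you choose $\theta$, even when the total weight $\beta$ is tiny and no splitting is needed. (This is not a pathological fantasy: the near-extremal graph $H_{k,m}$ discussed right after the statement of \cref{lem:chi} realizes precisely this structure, a clique on $\abs{\mac C}-2k+2$ vertices completely joined to a precoloured stable set of size $2k-1$.) So the promised ``careful version of these estimates'' cannot exist: any argument that insists each of the $\ge t$ parts gets its own colour outside $c(S)$ loses an additive $\abs{S}$ on top of $t$, which is affordable when $\abs{\mac C}\ge 4k-2$ (this is essentially how \cref{lem:4k-1} works, and it only reaches $\abs{\mac C}-2k+2$) but not with $\abs{\mac C}\approx(3+\frac1{16})k$ and the optimal bound $\abs{\mac C}-2k+3$.

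The missing idea is colour reuse inside a class. The paper's proof partitions all of $V(G)$ (not $V(G)\setminus S$) into $\chi$ stable sets $S_i$ and never requires the uncoloured part $P_i=S_i\setminus S$ to avoid the colours already used on its own class, in particular the colours of $c$ on $S\cap S_i$; recovering exactly this $\abs{S}$'s worth of colours is what makes $\abs{\mac C}-2k+3$ reachable. Likewise, splitting a heavy class must not cost one fresh colour per block: the reduction lemma (\cref{lem:reduction}, via increasing critical fit sequences, jumps and landmarks) colours a sub-part of weight at least $q_ik$ with only $q_i$ colours, the leftover $t_i+1$ blocks of weight less than $k$ later share at most $t_i$ colours, and the global colour budget is controlled by the inequality $p\le t-1$ with $t=2k-\abs{S}$ rather than by a Hall condition over all parts; the final step is a sorted greedy assignment using $ix_i<kt'$, and the constant $3+\frac1{16}$ comes out of optimizing that last estimate, not out of a packing threshold. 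Your fallback (recolouring the few vertices that receive a forbidden colour) hits the same wall, since those vertices must still avoid the colours of their neighbours in other classes and in $S$. To salvage your framework you would at least have to (i) start from a colouring of all of $G$ so that a class may reuse the colours of its own precoloured vertices, and (ii) replace ``distinct representatives for all parts'' by an accounting where a class of weight $w$ costs only about $\lfloor w/k\rfloor$ extra colours; at that point you are essentially rebuilding the paper's argument.
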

	It is worth noting that one cannot ask for $\chi(G)\ge\abs{\mac C}-2k+4$ in~\cref{lem:chi},
	since for every given value of $\abs{\mac C}\ge2k-1$ there is an inextensible graph $G$ with $\chi(G)=\abs{\mac C}-2k+3$.
	To see this, let $m:=\abs{\mac C}-2k+4\ge3$,
	let $S$ be a stable set of cardinality $2k-1$,
	let $K$ be a complete graph on $m-2$ vertices,
	and let $H_{k,m}$ be the graph obtained by joining every vertex in $S$ to every vertex in $K$;
	then $\chi(H_{k,m})=m-1$, but the template on $H_{k,m}$ with the vertices in $S$ precolored differently and no forbidden colors at the vertices in $K$
	witnesses the inextensibility of $H_{k,m}$.
	However, we do not know whether~\cref{lem:chi} still holds when $\abs{\mac C}\ge3k-1$, 
	which was the condition on $\abs{\mac C}$ needed to guarantee the $(k+1)$-connectivity of minimally inextensible graphs back in~\cref{sec:conn}.
	
	For the rest of this section we make use of the following setup.
	Let $\abs{\mac C}\ge3k-1$, let $G$ be an inextensible graph, let $\chi:=\chi(G)$,
	and let $S_1\cup\cdots\cup S_{\chi}$ be a partition of $V(G)$ into $\chi$ stable sets.
	Let $T=(S,c,F)$ be a good template of $G$ given by~\cref{lem:good},
	that is, $\abs{F(v)}<k$ for all $v\in V(G)\setminus S$.
	For every $P\subset V(G)\setminus S$, let $w(P):=\sum_{v\in P}\abs{F(v)}$ be the \emph{weight} of $P$;
	note that $w$ is additive under disjoint unions.
	For every $i\in[\chi]$,
	let $P_i:=S_i\setminus S$
	and $p_i:=\floor{w(P_i)/k}$;
	we may assume $P_i\ne\emptyset$,
	since we can add an isolated vertex to $P_i$ if $P_i=\emptyset$.
	Let $p:=p_1+\cdots+p_{\chi}$
	and $t:=2k-\abs{S}$,
	then
	\begin{equation}
		\label{eq:chi6}
		kt=2k^2-k\abs{S}>\cost(T)-k\abs{S}
		= w(P_1)+\cdots+w(P_{\chi}).
	\end{equation}
	Thus, since $w(P_i)\ge kp_i$ for all $i\in[\chi]$, we obtain $kt>k(p_1+\cdots+p_{\chi})=kp$
	which yields
	\begin{equation}
		\label{eq:chi7}
		p\le t-1.
	\end{equation}
	\subsection{A weaker upper bound on $g(k,m)$}
	To give a better explanation of the main argument, we give a quick proof of the bound $g(k,m)\le\max(m+2k-1,4k-1)$ for all $k\ge1$ and $m\ge2$,
	whose chromatic part follows from the following weaker version of~\cref{lem:chi}.
	\begin{lemma}
		\label{lem:4k-1}
		If $\abs{\mac C}\ge 4k-2$ then $\chi\ge\abs{\mac C}-2k+2$.
	\end{lemma}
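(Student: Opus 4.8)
I would prove the contrapositive: assume $\abs{\mac C}\ge4k-2$ and $\chi:=\chi(G)\le\abs{\mac C}-2k+1$, so that $\abs{\mac C}\ge\max\bigl(4k-2,\ \chi+2k-1\bigr)$, and construct a proper $\mac C$-coloring of $G$ respecting a good template — contradicting the inextensibility of $G$. Fix a good template $T=(S,c,F)$ via~\cref{lem:good}, so $\abs{F(v)}\le k-1$ for every $v\in V(G)\setminus S$, and keep the section's notation: $t=2k-\abs{S}\ge1$ (as $\abs{S}<2k$), $P_i=S_i\setminus S$, $\sum_i w(P_i)=\cost(T)-k\abs{S}<kt$, and $\abs{c(S)}\le\abs{S}=2k-t$.

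The coloring is built from a \emph{base coloring}. Colour $S$ by $c$. Since $\abs{\mac C}-\abs{c(S)}\ge(\chi+2k-1)-(2k-t)\ge\chi$, pick pairwise distinct \dd base colours\ee\ $\gamma_1,\dots,\gamma_\chi\in\mac C\setminus c(S)$ and colour every vertex of $P_i$ by $\gamma_i$. As each $S_i$ is stable and the $\gamma_i$ avoid $c(S)$, this is a proper $\mac C$-coloring of $G$ using at most $\abs{c(S)}+\chi\le\chi+2k-1\le\abs{\mac C}$ colours, and it respects $T$ except on the \emph{trouble set}
\[Q:=\{\,v\in V(G)\setminus S:\gamma_{\iota(v)}\in F(v)\,\},\]
where $\iota(v)$ is the index with $v\in S_{\iota(v)}$. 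To keep $Q$ small I would choose the $\gamma_i$ greedily: once $\gamma_1,\dots,\gamma_{i-1}$ are chosen there remain at least $\abs{\mac C}-\abs{c(S)}-(i-1)\ge t$ admissible colours, and $\sum_{\gamma\ \mathrm{admissible}}\abs{\{v\in P_i:\gamma\in F(v)\}}\le\sum_{v\in P_i}\abs{F(v)}=w(P_i)$, so some admissible $\gamma_i$ puts at most $w(P_i)/t$ vertices of $P_i$ into $Q$. Summing, $\abs{Q}\le\bigl(\sum_i w(P_i)\bigr)/t<kt/t=k$, i.e.\ $\abs{Q}\le k-1$.

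It remains to recolour the vertices of $Q$. Since $\abs{Q}\le k-1$, each $v\in Q$ has at most $k-2$ other vertices of $Q$ among its neighbours, so one can process $Q$ in any order, replacing at each step the colour of $v\in Q\cap S_i$ by a colour $\rho(v)$ that avoids $F(v)$, the colours $c(N(v)\cap S)$, the colours $\gamma_j$ for the $j\ne i$ in which $v$ has a neighbour in $P_j$, and the colours already given to recoloured neighbours of $v$ in $Q$ (there is no obstruction from $S_i$, which is stable). A direct check then shows the resulting $\mac C$-coloring is proper and respects $T$ — the desired contradiction.

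The step I expect to be the main obstacle is showing that a colour $\rho(v)$ as above always exists. The danger is a vertex $v\in Q$ having both many precoloured neighbours and neighbours spread across many of the classes $P_j$: then $F(v)$, $c(N(v)\cap S)$, and the $\gamma_j$'s occurring on $N(v)$ can together forbid nearly all of $\mac C$, and the crude count of surviving colours is positive only when $\abs{\mac C}$ comfortably exceeds $\chi+2k-1$. Closing this gap is the delicate part of the argument: it is where both the weight inequality $\sum_i w(P_i)<kt$ (which, when $\abs S$ is large, forces the total forbidden weight — and hence $Q$ — to be small) and the full hypothesis $\abs{\mac C}\ge4k-2$ (which provides slack when $\abs S$ is small) are used, typically via a case analysis on $\abs S$ in which the awkward high-weight vertices of $Q$ are treated separately — for instance by moving a few of them into the template, which the cost bound $\cost(T)<2k^2$ permits. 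Refining exactly this analysis is what upgrades $4k-2$ to $(3+\tfrac{1}{16})k-1$ in~\cref{lem:chi}.
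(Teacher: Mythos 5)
Your plan breaks down at exactly the step you flag, and the gap is not a technicality that the stated hypotheses can close: the repair colour $\rho(v)$ need not exist. A trouble vertex $v\in Q\cap S_i$ must avoid $F(v)$ (up to $k-1$ colours), $c(N(v)\cap S)$ (up to $\abs{S}=2k-t$ colours), the base colours $\gamma_j$ of every class meeting $N(v)$ (up to $\chi-1$ colours), and the colours of recoloured neighbours in $Q$ (up to $k-2$ colours). Under your supposition $\chi\le\abs{\mac C}-2k+1$ this forbids as many as $\abs{\mac C}+2k-t-3$ colours, which exceeds $\abs{\mac C}$ whenever $\abs{S}\ge3$; nothing in $\abs{\mac C}\ge4k-2$ or in the weight bound $\sum_i w(P_i)<kt$ from \eqref{eq:chi6} prevents a single vertex of $Q$ from being adjacent to all of $S$ and to vertices of almost every other class $P_j$ --- those inequalities control total forbidden-list weight and hence $\abs{Q}$, but the obstruction here comes from adjacency, not from the lists. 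Your proposed rescue of \dd moving awkward vertices into the template\ee{} does not help either: $G$ itself is the inextensible graph, so enlarging the template gives you no extensibility statement to invoke (that device is used in the paper only for proper subgraphs, as in \cref{lem:kappa}), and you would still have to exhibit admissible colours for precisely the vertices that caused the problem.

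The paper's proof is organised so that no per-vertex repair is ever needed: each colour class $P_i$ with $p_i\ge1$ is first split by \cref{lem:partition} into $p_i$ stable parts of weight less than $2k$, and every part receives a colour that is \emph{fresh} (unused on $S$ and on all previously coloured parts) and avoids the union of the forbidden lists inside that part; properness is then automatic, since distinct parts get distinct colours and all of them avoid $c(S)$. The hypothesis $\abs{\mac C}\ge4k-2$ enters only in showing each list $L_{ij}$ has size at least $p\le t-1$ (via \eqref{eq:chi7}), and the remaining classes with $p_i=0$ are handled by sorting them by weight $x_i$ and checking $x_i+i\le k+t'-1$, so that a greedy assignment of distinct fresh colours goes through with only $\abs{\mac C}\ge3k-1$. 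So while your base-colouring and averaging bound $\abs{Q}\le k-1$ are fine, the local-repair phase is the wrong mechanism for this lemma, and the argument as proposed does not yield \cref{lem:4k-1}.
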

	The proof of this lemma resembles the argument in~\cite{MR4453744},
	utilizing the following simple fact.
	\begin{lemma}
		\label{lem:partition}
		Let $k,q\ge1$ be integers, and let $Q$ be a set of integers (repeated members allowed) with $0\le a\le k$ for all $a\in Q$ and $qk\le\sum_{a\in Q}a<(q+1)k$.
		Then there exists a partition $Q=Q_1\cup\cdots\cup Q_q$ such that $\sum_{a\in Q_i}a<2k$ for all $i\in[q]$.
	\end{lemma}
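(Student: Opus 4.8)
The plan is to prove \cref{lem:partition} by a short extremal argument: among all ways of splitting $Q$ into $q$ blocks, take one that is ``as balanced as possible'', and show that if some block were too heavy we could rebalance it, producing a strictly more balanced partition --- a contradiction.

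Concretely, for a partition $Q = Q_1\cup\cdots\cup Q_q$ write $s_i:=\sum_{a\in Q_i}a$ for the weight of the $i$-th block, and consider the potential $\Phi:=\sum_{i=1}^q s_i^2$. Since at least one such partition exists (e.g.\ $Q_1=Q$ and $Q_2=\cdots=Q_q=\emptyset$), we may fix a partition minimising $\Phi$, and the claim is that $s_i<2k$ for every $i\in[q]$, which is exactly the desired conclusion. Suppose not; relabel so that $s_1=\max_i s_i\ge 2k$. If $q=1$ this already contradicts $s_1=\sum_{a\in Q}a<(q+1)k=2k$, so assume $q\ge2$. Then $\sum_{i=2}^q s_i=\sum_{a\in Q}a-s_1<(q+1)k-2k=(q-1)k$, and being a sum of $q-1$ nonnegative terms, it forces some block $Q_j$ with $j\ge2$ to satisfy $s_j<k$.

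Now comes the heart of the matter: since $s_1\ge 2k>0$, the block $Q_1$ contains some element $a$ with $0<a\le k$; move this element from $Q_1$ into $Q_j$. The resulting change in the potential is
\[
\bigl[(s_1-a)^2+(s_j+a)^2\bigr]-\bigl[s_1^2+s_j^2\bigr]=2a\bigl(a-(s_1-s_j)\bigr),
\]
and since $s_1-s_j>2k-k=k\ge a$ while $a>0$, this quantity is strictly negative. Thus the modified partition has smaller $\Phi$, contradicting the choice of partition; hence every block has weight less than $2k$, proving \cref{lem:partition}. (If one wants the blocks to be nonempty, note that the hypothesis $\sum_{a\in Q}a\ge qk$ together with $a\le k$ for all $a\in Q$ forces $Q$ to have at least $q$ positive elements, so a partition into $q$ nonempty blocks exists to start with, and the exchange step never empties a block because $s_1-a\ge 2k-k=k>0$.)

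I do not anticipate a real obstacle here: the only points to get right are the choice of a convex potential and the verification that the rebalancing exchange strictly decreases it, for which the inequality $s_1-s_j>k\ge a$ (using $s_1\ge 2k$, $s_j<k$, and $a\le k$) is precisely what is required. An equally workable alternative would be to minimise $\max_i s_i$, breaking ties by the number of blocks attaining the maximum, but the potential $\sum_i s_i^2$ sidesteps the tie-breaking bookkeeping and makes the one-line computation above do all the work.
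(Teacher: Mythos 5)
Your proof is correct, and it takes a genuinely different route from the paper's. The paper proves \cref{lem:partition} by induction on $q$: it extracts a subset $R\subset Q$ whose weight lies in $[(q-1)k,qk)$ (possible because each element is at most $k$), applies the induction hypothesis to $R$ to get $Q_1,\ldots,Q_{q-1}$, and takes $Q_q:=Q\setminus R$, whose weight is then less than $(q+1)k-(q-1)k=2k$; in effect a greedy peeling of one block at a time. You instead run a variational argument: minimise the convex potential $\sum_{i}s_i^2$ over partitions into $q$ blocks and show that a block of weight at least $2k$ can be lightened by moving one positive element into a block of weight less than $k$, which exists because the remaining $q-1$ blocks have total weight below $(q-1)k$; the computation $2a\bigl(a-(s_1-s_j)\bigr)<0$, justified by $s_1-s_j>2k-k=k\ge a$ and $a>0$, is exactly what is needed, and your side remarks (a positive element exists in the heavy block since $s_1\ge 2k>0$; the exchange never empties a block since $s_1-a\ge k>0$) are accurate. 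What each approach buys: the paper's induction is constructive in a greedy, algorithmic way and matches the style of the later fit-sequence arguments; your exchange argument avoids the induction and the choice of an extremal subset, compresses the verification into one local-move inequality, and in fact yields a \emph{balanced} partition (no block of weight $\ge 2k$ at the global minimum of $\Phi$), which is slightly more information than the lemma asks for. Either proof is perfectly adequate for the role the lemma plays in the proof of \cref{lem:4k-1}.
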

	\begin{proof}
		The lemma is trivial for $q=1$.
		Let $q>1$ and assume that it holds for $q-1$.
		Let $R$ be a maximal subset of $Q$ with $\sum_{a\in R}a\ge(q-1)k$; then $\sum_{a\in R}a<qk$ as $0\le a\le k$ for all $a\in Q$;
		so by induction there is a partition $R=Q_1\cup\cdots\cup Q_{q-1}$ with $\sum_{a\in Q_i}a<2k$ for all $i\in[q-1]$.
		Then let $Q_q:=Q\setminus R$ and note that $\sum_{a\in Q_q}a<(q+1)k-(q-1)k=2k$.
		This proves~\cref{lem:partition}.
	\end{proof}
	\begin{proof}
		[Proof of~\cref{lem:4k-1}]
		Let $I_0:=\{i\in[\chi]:p_i=0\}$, let $I_1:=\{i\in[\chi]:p_i\ge1\}$,
		and let $P:=\bigcup_{i\in I_1}P_i$;
		note that $p=\sum_{i\in I_1}p_i$.
		For each $i\in I_1$, \cref{lem:partition} with $(Q,q)=(P_i,p_i)$ gives a partition $P_i=P_{i1}\cup\cdots\cup P_{ip_i}$
		such that $w(P_{ij})<2k$ for all $j\in[p_i]$.
		For every $i\in I_1$ and $j\in[p_i]$, let $L_{ij}:=\mac C\setminus (c(S)\cup\bigcup_{v\in P_{ij}}F(v))$; then since $p\le t-1=2k-\abs{S}-1$ by~\eqref{eq:chi7} and since $\abs{\mac C}\ge4k-2$, we see that
		\[\abs{L_{ij}}
		\ge \abs{\mac C}-\abs{S}-w(P_{ij})
		\ge\abs{\mac C}-(2k-p-1)-(2k-1)
		=\abs{\mac C}-(4k-2)+p
		\ge p.\]
		Hence we can assign $p=\sum_{i\in I_1}p_i$ different colors to the stable sets in $\bigcup_{i\in I_1}\{P_{ij}:j\in[p_i]\}$,
		obtaining a proper coloring $c'$ of $G[S\cup P]$ respecting $T_{S\cup P}$ with $\abs{c'(P)}\le p$.
		Put $S':=S\cup P$ and $t':=t-p\ge1$; then 
		\[\abs{c'(S')}\le\abs{S}+p=2k-t+p=2k-t'.\]
		
		Now, suppose for a contradiction that $\chi\le\abs{\mac C}-2k+1$,
		and let $L_i:=\mac C\setminus (c'(S')\cup\bigcup_{v\in P_i}F(v))$ for all $i\in I_0$.
		Let $I':=\{i\in I_0:w(P_i)\ge t'\}$.
		Note that for every $i\in I'\setminus I_0$, we have that
		\[\abs{L_i}\ge\abs{\mac C}-\abs{c'(S')}-w(P_i)
		\ge \abs{\mac C}-(2k-t')-(t'-1)
		=\abs{\mac C}-2k+1
		\ge\chi\ge\abs{I_0}.\]
		Thus, if we can assign $\abs{I'}$ different colors to the stable sets in $\{P_i:i\in I'\}$ such that each $P_i$ gets a color in $L_i$,
		then we can assign $\abs{I_0}$ different colors to the stable sets in $\{P_i:i\in I_0\}$ such that each $P_i$ gets a color in $L_i$.
		This would give a proper coloring of $G$ respecting $T$, a contradiction.
		
		We now assign colors to $\{P_i:i\in I'\}$.
		We may assume that $I'\ne\emptyset$.
		Let $x_i:=w(P_i)$ for all $i\in I'$, and assume that $I'=[n]$ and $x_1\ge\ldots\ge x_n$ for some $n\ge1$.
		Since $p=\sum_{i\in I_1}p_i$, by~\eqref{eq:chi6} we see that for every $i\in[n]$,
		\[kt>\sum_{i\in I_0}x_i+\sum_{i\in I_1}w(P_i)
		\ge ix_i+k\sum_{i\in I_1}p_i
		=ix_i+kp\]
		so $i<kt'/x_i$.
		It follows that, since $x_i=w(P_i)\le k$,
		\[x_i+i<x_i+\frac{kt'}{x_i}
		=k+t'-\frac{(k-x_i)(x_i-t')}{x_i}\le k+t'\]
		so $x_i+i\le k+t'-1$, which yields
		(note that $\abs{\mac C}\ge4k-2\ge3k-1$)
		\[\abs{L_i}-i\ge \abs{\mac C}-\abs{c'(S')}-x_i-i
		\ge \abs{\mac C}-(2k-t')-(k+t'-1)
		=\abs{\mac C}-(3k-1)\ge0.\]
		Thus, $\abs{L_i}\ge i$ for all $i\in[n]$,
		and so we can greedily assign $n=\abs{I'}$ different colors to the stable sets $P_1,\ldots,P_n$ such that each $P_i$ gets a color in $L_i$.
		This proves~\cref{lem:4k-1}.
	\end{proof}
	We can now give a proof that $g(k,m)\le\max(m+2k-1,4k-1)$ for all $k\ge1$ and $m\ge2$.
	\begin{proposition}
		\label{prop:4k-1}
		For every integer $k\ge1$, every graph $G$ with $\chi(G)\ge4k-1$ contains a $(k+1)$-connected subgraph with more than $\chi(G)-k$ vertices and chromatic number at least $\chi(G)-2k+1$.
	\end{proposition}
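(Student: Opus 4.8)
The plan is to combine the two halves already developed in the excerpt: the connectivity bound from \cref{lem:kappa} and the chromatic bound from \cref{lem:4k-1}. Let $G$ be a graph with $\chi(G)\ge 4k-1$. First I would choose a set $\mac C$ of colors with $\abs{\mac C}=\chi(G)-1\ge 4k-2$; by the remark following the definition of inextensibility, $G$ is $\mac C$-inextensible, witnessed by the empty template. Since every inextensible graph contains a minimally inextensible induced subgraph, pick $H\subset G$ minimally inextensible. The inequality $\abs{\mac C}\ge 4k-2\ge 3k-1$ lets us apply \cref{lem:kappa} to $H$: it is $(k+1)$-connected and has more than $\abs{\mac C}-k+1=\chi(G)-k$ vertices. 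This disposes of the connectivity and vertex-count assertions.

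For the chromatic lower bound on $H$, the key point is that $H$ is \emph{also} inextensible (with respect to the same color set $\mac C$), so \cref{lem:4k-1} applies to it directly — indeed the setup preceding \cref{lem:4k-1} only assumes $\abs{\mac C}\ge 3k-1$ and an inextensible graph, both of which hold for $H$. Thus $\chi(H)\ge\abs{\mac C}-2k+2=(\chi(G)-1)-2k+2=\chi(G)-2k+1$. Combining, $H$ is the desired $(k+1)$-connected subgraph of $G$ with more than $\chi(G)-k$ vertices and $\chi(H)\ge\chi(G)-2k+1$.

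I would also double-check the degenerate boundary: when $k=1$ the hypothesis reads $\chi(G)\ge 3$, and we want a $2$-connected subgraph with more than $\chi(G)-1$ vertices and chromatic number at least $\chi(G)-1$; this is consistent with the classical fact $g(1,m)=\max(m,3)$, and the argument above goes through since $\abs{\mac C}=\chi(G)-1\ge 2=4k-2$. I do not expect any genuine obstacle here: the proposition is essentially a bookkeeping corollary that packages \cref{lem:kappa} and \cref{lem:4k-1} together, the only mild subtlety being to track the off-by-one shift introduced by taking $\abs{\mac C}=\chi(G)-1$ rather than $\chi(G)$, and to confirm that the chosen minimally inextensible subgraph inherits inextensibility (immediate, since it \emph{is} inextensible by definition) so that both lemmas are legitimately applicable to the same graph $H$ with the same $\mac C$.
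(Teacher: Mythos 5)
Your proposal is correct and follows exactly the paper's own proof: take $\mac C$ with $\abs{\mac C}=\chi(G)-1\ge 4k-2$, pass to a minimally $\mac C$-inextensible subgraph $H$, and apply \cref{lem:kappa} for the connectivity and vertex count together with \cref{lem:4k-1} for the bound $\chi(H)\ge\abs{\mac C}-2k+2=\chi(G)-2k+1$. The bookkeeping (including the off-by-one from $\abs{\mac C}=\chi(G)-1$ and the fact that $H$ is itself inextensible) matches the paper's argument, so there is nothing to add.
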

	\begin{proof}
		Let $\mac C$ be a set of $\chi(G)-1$ colors.
		Then $G$ is $\mac C$-inextensible;
		so it has a minimally $\mac C$-inextensible subgraph $H$.
		Then $H$
		is $(k+1)$-connected and has more than $\chi(G)-k$ vertices by~\Cref{lem:kappa},
		and satisfies $\chi(H)\ge\abs{\mac{C}}-2k+2
		=\chi(G)-2k+1$~by~\Cref{lem:4k-1}.
		This proves~\Cref{prop:4k-1}.
	\end{proof}
	\subsection{Reduction step}
	The proof of~\cref{lem:4k-1} consists of two steps: coloring the stable sets $P_i$ with $p_i\ge1$ by $p$ colors,
	then coloring those $P_i$ with $p_i=0$.
	As shown in the proof, the second step can be done smoothly under the condition $\abs{\mac C}\ge3k-1$ as long as the first step has been carried out,
	which is possible when $\abs{\mac C}\ge 4k-2$ thanks to~\cref{lem:partition}.
	In order to go below $4k-2$ significantly,
	it might be tempting to improve the constant factor $2$ in~\cref{lem:partition} to a (much) smaller constant independent of $q\ge1$.
	This, however, is not possible even if one asks for a partition of $Q$ into $q+r$ sets each with sum of elements less than $(2-\vep)k$, 
	for any given integer $r\ge0$ and any given small $\vep>0$.\footnote{To see this, let $\vep>0$ be sufficiently small, let $k$ be much larger than $1/\vep$, let $q$ be such that $r/\vep<(q+r)/2<(r+1)/\vep$,
		and let $Q$ be some set of $q+r+1$ integers at least $(1-\vep/2)k$ and smaller than $k$.}
	Another potential shortcoming of the proof of~\cref{lem:4k-1} is that the goodness of the template $T$ was never really used, since the calculations involving every $\abs{F(v)}$ only require them to be at most $k$.
	On the other hand, given the goodness of $T$,
	every partition of $P_i$ into stable sets of weight less than $k$ must have at least $p_i+1$ components
	as $w(P_i)\ge p_ik$.
	In fact, we shall prove that under the condition $\abs{\mac C}\ge3k-1$,
	it is possible to get around~\cref{lem:partition} and reduce~\cref{lem:chi} essentially to the case when each $P_i$ has a partition into exactly $p_i+1$ sets of weight less than $k$.
	More precisely, for each $i\in[\chi]$, there is a nonnegative integer $q_i\le p_i$ and a subset of $P_i$ of weight at least $q_ik$ whose vertices can be colored by at most $q_i$ colors and whose complement in $P_i$ can be partitioned into $p_i-q_i+1$ stable sets each of weight less than $k$.
	This is the content of the following lemma.
	\begin{lemma}
		\label{lem:reduction}
		If $\abs{\mac C}\ge3k-1$,
		then there exist integers $q_1,\ldots,q_{\chi}$ with $0\le q_i\le p_i$ for all $i\in[\chi]$
		and subsets $P_1',\ldots,P_{\chi}'$ of $P_1,\ldots,P_{\chi}$ respectively, such that
		\begin{itemize}
			\item for every $i\in[\chi]$,
			$w(P_i')\ge q_ik$, and
			there is a partition $P_i\setminus P_i'=\bigcup_{j\in[t_i+1]}P_{ij}$ 
			with $w(P_{ij})<k$ for all $j\in[t_i+1]$
			where $t_i:=p_i-q_i$; and
			
			\item for $P^1:=\bigcup_{i\in[\chi]}P_i'$,
			there is a proper coloring $c_1$ of $G[S\cup P^1]$ respecting $T_{S\cup P^1}$ with $\abs{c_1(P_i')}\le q_i$ for all $i\in[\chi]$ (and so $\abs{c_1(P^1)}\le q_1+\cdots+q_{\chi}$).
		\end{itemize}
	\end{lemma}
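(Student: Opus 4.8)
The plan is to build $q_i$ and $P_i'$ one color class at a time, and only afterwards to produce the coloring $c_1$ of $P^1$ in a single greedy sweep. Fix $i\in[\chi]$ and partition $P_i$ greedily into stable sets of weight less than $k$: repeatedly peel off from the current remainder a subset $B_{ij}$ of weight $<k$ that is maximal with this property, obtaining parts $B_{i1},\dots,B_{iM_i}$. Since $B_{ij}$ is maximal of weight $<k$, adding any later vertex would reach weight $\ge k$, so in particular $w(B_{ij})+w(B_{i,j+1})\ge k$ for each $j$; summing these gives $2w(P_i)\ge(M_i-1)k$, hence $M_i\le 2p_i+2$ (using $w(P_i)<(p_i+1)k$), while $M_i\ge p_i+1$ because each part has weight at most $k-1$. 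If $M_i\le p_i+1$ I would simply set $q_i:=0$ and $P_i':=\emptyset$, padding with empty parts, which verifies the first bullet with nothing left to color.

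In the remaining case $M_i\ge p_i+2$, write $r_i:=M_i-p_i-1$, so $1\le r_i\le p_i+1$. Here I would absorb the excess into $P_i'$: choose $q_i:=\min(r_i,p_i)$, which is at least $1$ (note $p_i\ge1$ once $M_i\ge2$, since then $w(P_i)\ge w(B_{i1})+w(B_{i2})\ge k$), and group $r_i+q_i$ consecutive parts into $q_i$ blocks $Q_{i1},\dots,Q_{iq_i}$, each a union of two or three consecutive parts, leaving the other $p_i-q_i+1$ parts as the light sets $P_{ij}$. Each block contains a consecutive pair of parts, hence has weight at least $k$, so $w(P_i')=\sum_l w(Q_{il})\ge q_ik$, and $q_i\le p_i$, as required. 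To keep the forbidden-color count of the blocks under control, I would first move all \emph{heavy} vertices of $P_i$ (those with $\abs{F(v)}\ge\lceil k/2\rceil$) into $P_i'$, run the greedy procedure on the light remainder only, and then redistribute the heavy vertices among the blocks so that no block is much heavier than $k$; this keeps each $Q_{il}$ of weight roughly $k$ while preserving $w(P_i')\ge q_ik$.

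For the last bullet I would color the classes $Q_{il}$ one at a time. A class $Q\subseteq P_i'$ is stable, so $N(Q)\cap S\subseteq S\setminus S_i$, and the colors to avoid for $Q$ are $\bigcup_{v\in Q}F(v)$, the colors $c$ assigns on $N(Q)\cap S$ (at most $\abs S$ of them), and the colors already used on classes in the other $S_j$'s adjacent to $Q$ (at most $\sum_{j\ne i}q_j$ of them). Combining $\abs{\mac C}\ge3k-1$ with the bound $\abs S+p\le 2k-1$, which follows from $t=2k-\abs S$ and \eqref{eq:chi7}, the number of colors still available for $Q$ is at least $k+q_i-\abs{\bigcup_{v\in Q}F(v)}$; so it is enough that each block carries at most $k+q_i-1$ distinct forbidden colors, which is exactly what the balanced-block construction is designed to guarantee. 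Coloring the classes greedily then yields $c_1$, and since the $q_i$ classes of $P_i'$ cover $P_i'$ we get $\abs{c_1(P_i')}\le q_i$ and hence $\abs{c_1(P^1)}\le q_1+\dots+q_\chi$.

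I expect the main difficulty to lie precisely in reconciling the two demands on $P_i'$: the first bullet forces $P_i'$ to be heavy, $w(P_i')\ge q_ik$, while the coloring step forces it to split into $q_i$ classes each with few distinct forbidden colors, and these conflict exactly when $P_i$ carries many vertices with large, nearly disjoint forbidden sets. This is the case in which I expect inextensibility to enter, through the maximality of $\abs S$ built into~\cref{lem:good}: a cluster of such vertices together with the precoloring would leave enough room to recolor, or to move a vertex into $S$, contradicting that $T$ witnesses inextensibility with $\abs S$ as large as possible. Making this tension close, rather than the routine greedy bookkeeping of the first three paragraphs, is the crux of the argument.
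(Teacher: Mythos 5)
There is a genuine gap, and it sits exactly where you place it yourself: the ``tension'' you defer to the end is the whole content of \cref{lem:reduction}, and your sketched mechanism for closing it does not work. Your plan prescribes, in advance, $q_i$ blocks of weight at least $k$ and then asks for a \emph{single} available color per block, which requires each block $Q$ to satisfy $\abs{\bigcup_{v\in Q}F(v)}\le k+q_i-1$. But a block forced to have weight at least $k$ can carry up to roughly $2k$ distinct forbidden colors: take $P_i$ to consist of vertices with $\abs{F(v)}=k-1$ and pairwise disjoint forbidden sets. Then every stable part of weight less than $k$ is a single vertex, any union of two parts has $2k-2$ distinct forbidden colors, and no ``balancing'' or redistribution of heavy vertices can produce blocks whose weight lies in $[k,\,k+q_i-1]$ when $q_i$ is small; with $\abs{\mac C}=3k-1$ the count $k+q_i-\abs{\bigcup_{v\in Q}F(v)}$ is then negative, so the greedy sweep stalls. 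Your fallback---that such a configuration should contradict inextensibility via the maximality of $\abs S$ in \cref{lem:good}---is not an argument: that maximality has already been spent to get $\abs{F(v)}\le k-1$, and moving a vertex with $\abs{F(v)}\le k-1$ into $S$ \emph{increases} the cost by $k-\abs{F(v)}\ge1$, so the exchange that proved \cref{lem:good} is unavailable here. The configuration above is perfectly compatible with a good template, so nothing forbids it.

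The paper's proof avoids prescribing monochromatic heavy blocks altogether. It takes an increasing \emph{critical} fit sequence of $P_i$ (parts of weight less than $k$, no two consecutive non-jumps), and colors the parts between consecutive landmarks: since the total weight in such a window of $n_{r+1}-n_r+1$ parts is less than $(n_{r+1}-n_r)k$ while the pool $L$ of unused colors has size at least $k$, the lists $L_j=L\setminus\bigcup_{v\in Q_j^r}F(v)$ have total size exceeding $\abs L$, so two of them share a color; hence the window needs only $n_{r+1}-n_r=q^i_{r+1}-q^i_r$ colors, and a swap trick pushes one uncolored part to the end of the sequence. The set $P_i'$ and the bound $w(P_i')\ge q_ik$ \emph{emerge} from the landmark bookkeeping (via $w_{n_\ell-1}(\mac Q)\ge q^i_\ell k$); its color classes are unions of light parts that happen to share a color, never blocks required in advance to dodge the union of their forbidden sets. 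Your first two paragraphs (the greedy partition, the counts $p_i+1\le M_i\le 2p_i+2$, and the case $q_i=0$) are fine but routine; without a replacement for the landmark/pigeonhole-and-swap step, the proposal does not prove the lemma.
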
	
	To prove~\cref{lem:reduction},
	we need several notions.
	Given a subset $P$ of $V(G)$,
	a sequence $\mac Q=(Q_1,\ldots,Q_n)$ of nonempty disjoint subsets of $P$ is called \emph{fit} if $0\le w(Q_j)<k$ for all $j\in[n]$ and $P=Q_1\cup\cdots\cup Q_n$.
	Then every sequence $(v_1,\ldots,v_{\abs{P}})$ of vertices in $P$ is a fit sequence of $P$ since $w(\{v_j\})=\abs{F(v_j)}<k$ for all $j\in[n]$ by the goodness of $T$;
	also, note that permuting the terms $Q_1,\ldots,Q_n$ of $\mac Q$ preserves the fitness of the resulting sequence.
	A fit sequence $\mac Q=(Q_1,\ldots, Q_n)$ is said to be \emph{increasing}
	if $w(Q_1)\le\ldots\le w(Q_n)$;
	observe that a fit sequence of $\mac Q$ can be made increasing by sorting its terms,
	and so $P_i$ always has an increasing fit sequence.
	
	Given a fit sequence $\mac Q=(Q_1,\ldots, Q_n)$ of $P$,
	let $w_j(\mac Q):=w(Q_1\cup\cdots\cup Q_j)$ for all $j\in[n]$;
	then $w_n(\mac Q)=w(P)$.
	For $j\in[n]$,
	say that $Q_j$ is a \emph{jump} of $\mac Q$ if $\floor{w_j(\mac Q)/k}
	=\floor{w_{j-1}(\mac Q)/k}+1$,
	that is,
	there is an integer $q\ge1$ such that
	\[(q-1)k\le w_{j-1}(\mac Q)<qk\le w_j(\mac Q)<(q+1)k\]
	and say that $Q_j$ is a \emph{non-jump} of $\mac Q$
	if $\floor{w_j(\mac Q)/k}=\floor{w_{j-1}(\mac Q)/k}$,
	that is,
	there is an integer $q\ge 0$ with
	\[qk\le w_{j-1}(\mac Q)\le w_j(\mac Q)<(q+1)k,\]
	and in this case say that $q$ is a \emph{landmark} of $\mac Q$.
	Observe that,
	as $0\le w(Q_j)<k$ for all $j\in[n]$,
	\begin{itemize}
		\item $Q_1$ is always a non-jump, in particular $0$ is always a landmark of $\mac{Q}$;
		
		\item each $Q_j$ is either a jump or a non-jump;
		
		\item there are $\floor{w(P)/k}$ jumps in total; and
		
		\item every landmark of $\mac Q$ is at most $\floor{w(P)/k}$.
	\end{itemize}
	
	A fit sequence of $P$ is called \emph{critical} if it has no two consecutive non-jumps.
	We observe:
	\begin{claim}
		\label{claim:critical}
		Every $P\subset V(G)$ has an increasing critical sequence.
	\end{claim}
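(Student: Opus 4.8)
The plan is to obtain an increasing critical sequence of $P$ by a short extremal argument. First I would record that $P$ always admits at least one increasing fit sequence: listing the vertices of $P$ as $v_1,\ldots,v_{\abs{P}}$ with $\abs{F(v_1)}\le\cdots\le\abs{F(v_{\abs{P}})}$, the sequence $(\{v_1\},\ldots,\{v_{\abs{P}}\})$ is fit because $w(\{v_i\})=\abs{F(v_i)}<k$ by the goodness of $T$, and it is increasing by construction (and for $P=\emptyset$ the empty sequence works). Among all increasing fit sequences of $P$ I would then fix one, say $\mac Q=(Q_1,\ldots,Q_n)$, whose number of terms $n$ is as small as possible; the claim then reduces to showing that such a minimal $\mac Q$ has no two consecutive non-jumps.

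To prove this, suppose for contradiction that $Q_j$ and $Q_{j+1}$ are both non-jumps of $\mac Q$ for some $j\in[n-1]$. The crucial point is that then $w(Q_j)+w(Q_{j+1})<k$: since $Q_j$ is a non-jump there is an integer $q\ge0$ with $qk\le w_{j-1}(\mac Q)\le w_j(\mac Q)<(q+1)k$, and since $Q_{j+1}$ is a non-jump there is an integer $q'\ge0$ with $q'k\le w_j(\mac Q)\le w_{j+1}(\mac Q)<(q'+1)k$; these force $\floor{w_j(\mac Q)/k}=q=q'$, and hence, by the additivity of $w$ under disjoint unions,
\[ w(Q_j)+w(Q_{j+1})=w_{j+1}(\mac Q)-w_{j-1}(\mac Q)<(q+1)k-qk=k. \]
Consequently $R:=Q_j\cup Q_{j+1}$ is a nonempty subset of $P$ with $0\le w(R)<k$. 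Replacing the two terms $Q_j,Q_{j+1}$ of $\mac Q$ by the single term $R$ and re-sorting the resulting $n-1$ sets in nondecreasing order of weight yields an increasing fit sequence of $P$ with only $n-1$ terms (fitness is preserved under permuting terms, each term still has weight in $[0,k)$, and the terms still partition $P$), contradicting the minimality of $n$. Therefore $\mac Q$ has no two consecutive non-jumps, i.e.\ it is critical, and being increasing it is the required increasing critical sequence of $P$.

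The only step needing genuine care is the displayed inequality — that two consecutive non-jumps necessarily share a landmark and so have combined weight below $k$ — and I expect this to be the main (though still routine) obstacle; the existence of an increasing fit sequence, the invariance of fitness under permuting terms, and the drop in length after the merge are all immediate from the definitions. Note also that the argument uses nothing beyond $w(Q_j)<k$ for every term (equivalently, the goodness of $T$).
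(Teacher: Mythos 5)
Your proof is correct and is essentially the paper's argument: both rest on merging two consecutive non-jumps into a single term of weight less than $k$ and re-sorting, the only difference being that you package the termination of this merging as an extremal choice (an increasing fit sequence with fewest terms) while the paper runs it as an explicit iteration that stops because the number of terms drops each step. Your careful verification of the displayed inequality $w(Q_j)+w(Q_{j+1})<k$ is exactly the point the paper notes in passing.
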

	\begin{proof}
		We can choose an increasing fit sequence of $P$, then combine two consecutive non-jumps (if exist) into a new term (note that the new term still has weight less than $k$) and sort the new fit sequence to get an increasing one.
		This procedure should end after finitely many steps since the total number of terms at each step decreases by one;
		and the final sequence is increasing and critical.
	\end{proof}
	Given a critical sequence $\mac Q=(Q_1,\ldots,Q_n)$ of $P$
	with landmarks $q_1,\ldots,q_{\ell}$ satisfying $0=q_1<\ldots<q_{\ell}\le\floor{w(P)/k}$, it is also not hard to see that:
	\begin{itemize}
		\item $\ell=n-\floor{w(P)/k}$;
		
		\item for every $r\in[\ell-1]$,
		there are exactly $q_{r+1}-q_r$ jumps of $\mac Q$ between $q_rk$ and $(q_{r+1}+1)k$, in particular
		$q_rk\le w_{q_r+r}(\mac Q)<(q_r+1)k$ for all $r\in[\ell]$; and
		
		\item there are exactly $\floor{w(P)/k}-q_{\ell}$ jumps of $\mac Q$ after $q_{\ell}k$.
	\end{itemize}
	
	Given the above definitions, the proof of~\cref{lem:reduction} relies on an iterative procedure which recolors and swaps stable sets within fit sequences of $P_1,\ldots,P_{\chi}$, as follows.
	\begin{proof}
		[Proof of~\cref{lem:reduction}] 
		It suffices to iterate the following claim for $i=1,2,\ldots,\chi$ in turn.
		\begin{claim}
			\label{claim:chi11}
			Let $i\in[\chi]$, and assume that 
			there exist integers $q_1,\ldots,q_{i-1}$ with $0\le q_h\le p_h$ for all $h\in[i-1]$
			and subsets $P_1',\ldots,P_{i-1}'$ of $P_1,\ldots,P_{i-1}$ respectively, such that
			\begin{itemize}
				\item for every $h\in[i-1]$,
				$w(P_h')\ge q_hk$, and
				there is a partition $P_h\setminus P_h'=\bigcup_{j\in[t_h+1]}P_{hj}$ 
				such that $w(P_{hj})<k$ for all $j\in[t_h+1]$ where $t_h:=p_h-q_h$; and
				
				\item for $P':=\bigcup_{h\in[i-1]}P_h'$,
				there is a proper coloring $c'$ of $G[S\cup P']$ respecting $T_{S\cup P'}$ with $\abs{c'(P_h')}\le q_h$ for all $h\in[i-1]$
				(and so $\abs{c'(P')}\le q_1+\cdots+q_{i-1}$).
			\end{itemize}
			Then there exists an integer $q_i$ with $0\le q_i\le p_i$ and a subset $P_i'$ of $P_i$ such that for $t_i:=p_i-q_i$,
			\begin{itemize}
				\item $w(P_i')\ge q_ik$, and
				there is a partition $P_i\setminus P_i'=\bigcup_{j\in[t_i+1]}P_{ij}$ with $w(P_{ij})<k$ for all $j\in[t_i+1]$; and
				
				\item we can color the vertices of $P_i'$ by at most $q_i$ colors such that every $v\in P_i'$ gets a color not in $c'(S\cup P')\cup F(v)$.
			\end{itemize} 
		\end{claim}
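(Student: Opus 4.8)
The plan is to build $q_i$ and $P_i'$ explicitly from an increasing critical sequence of $P_i$. If $p_i=0$ then $w(P_i)<k$, so $q_i:=0$ and $P_i':=\emptyset$ work, with $P_i\setminus P_i'=P_i$ being the single stable set $P_{i1}$. So assume $p_i\ge1$, and use~\cref{claim:critical} to fix an increasing critical sequence $\mac Q=(Q_1,\dots,Q_n)$ of $P_i$; it has exactly $p_i$ jumps and $\ell:=n-p_i\ge1$ landmarks $0=\lambda_1<\dots<\lambda_\ell\le p_i$. If $\ell=1$, then $Q_1$ together with the $p_i$ jumps partitions $P_i$ into $p_i+1$ stable sets of weight $<k$, and again $q_i:=0$, $P_i':=\emptyset$ work; so assume $\ell\ge2$. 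I would then set $q_i:=\lambda_\ell$ (so $1\le q_i\le p_i$) and let $P_i':=Q_1\cup\dots\cup Q_m$, where $Q_m$ is the $\lambda_\ell$-th jump of $\mac Q$. Since $\mac Q$ is critical, the non-jump with landmark $\lambda_\ell$ is the term immediately after $Q_m$ while every other non-jump comes before $Q_m$; hence the terms of $\mac Q$ after $Q_m$ are precisely that non-jump together with the last $p_i-\lambda_\ell$ jumps, and these form a partition of $P_i\setminus P_i'$ into $p_i-\lambda_\ell+1=t_i+1$ stable sets of weight $<k$. Also $w(P_i')=w_m(\mac Q)\ge\lambda_\ell k=q_ik$ because $Q_m$ reaches level $\lambda_\ell$. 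Thus the first bullet of the claim holds, and it remains to colour $P_i'$ with at most $q_i$ colours, each avoiding $c'(S\cup P')$ and the forbidden set of every vertex it is assigned to.

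For the colouring, write $\Gamma:=\mac C\setminus c'(S\cup P')$ for the available palette. Since $q_h\le p_h$ for all $h<i$ and $\sum_h p_h=p\le 2k-\abs S-1$ by~\eqref{eq:chi7} (recall $t=2k-\abs S$),
\[\abs\Gamma\ \ge\ \abs{\mac C}-\abs S-\sum_{h<i}q_h\ \ge\ \abs{\mac C}-\abs S-(p-q_i)\ \ge\ \abs{\mac C}-2k+1+q_i\ \ge\ k+q_i.\]
I would partition $P_i'$ into $q_i$ stable sets $U_1,\dots,U_{q_i}$, where $U_r$ collects the terms of $\mac Q$ lying after the $(r-1)$-th jump and up to and including the $r$-th jump (with $U_1$ also taking $Q_1$). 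By criticality each $U_r$ is a union of at most two consecutive terms of $\mac Q$, so the partial-sum estimates give $w(U_1\cup\dots\cup U_r)<(r+1)k$ for every $r$; and since $\mac Q$ is increasing, the $q$ heaviest of the $U_r$ are the last $q$ of them, so their total weight is less than $(q+1)k$ for each $q$. A greedy colouring of the $U_r$ in order of decreasing $\abs{\bigcup_{v\in U_r}F(v)}$ then succeeds with $q_i$ distinct colours of $\Gamma$ — one per $U_r$, each avoiding $\bigcup_{v\in U_r}F(v)$ — provided that for every $q\le q_i$ the $q$-th largest value of $\abs{\bigcup_{v\in U_r}F(v)}$ is at most $\abs\Gamma-q$.

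Verifying this last inequality is the crux, and where I expect the real work to lie. The cumulative-weight bound alone only gives that the $q$-th heaviest class has weight less than $k+k/q$, which does not beat $\abs\Gamma-q$ when $q_i$ is small; the missing ingredient is again the monotonicity of $\mac Q$, which should force any heavy colour class — necessarily one of the last few $U_r$ — to come with a correspondingly large top landmark $\lambda_\ell=q_i$, hence with a large palette $\abs\Gamma\ge k+q_i$. Making this quantitative (showing that in an increasing critical sequence the weight of each $U_r$ never exceeds $\abs\Gamma$ minus its rank in the sorted order) is the technical heart; should a stubborn over-heavy $U_r$ remain, I would move its non-jump into the leftover partition, where it has weight $<k$, and compensate by merging two light single-jump classes, adjusting $q_i$ and $t_i$ accordingly. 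The degenerate cases $\lambda_\ell=p_i$ and zero-weight terms $Q_j$ need only a routine check.
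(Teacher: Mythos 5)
Your construction of $q_i$ and $P_i'$ is essentially the paper's: taking $q_i$ to be the last landmark of an increasing critical sequence, $P_i'$ the initial terms up to the corresponding jump, and the remaining $t_i+1$ terms as the $P_{ij}$ gives the first bullet, and your palette estimate $\abs{\Gamma}\ge k+q_i$ is correct. But the second bullet --- colouring $P_i'$ with at most $q_i$ colours --- is exactly where your argument stops, and the specific plan you sketch fails. You propose to give a \emph{single} colour to each class $U_r$ consisting of a non-jump together with the following jump; such a class has weight up to nearly $2k$, so $\abs{\bigcup_{v\in U_r}F(v)}$ can exceed $\abs{\Gamma}\approx k+q_i$ when $q_i$ is small, and then no admissible colour for $U_r$ exists at all, let alone one satisfying your greedy rank condition. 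You acknowledge that you cannot verify the key inequality, and your fallback (moving a stubborn non-jump into the leftover partition and merging two light single-jump classes) is not worked out: it changes $q_i$ and $t_i$, hence both the requirement $w(P_i')\ge q_ik$ and the required number $t_i+1$ of leftover parts, and merging two jumps recreates the same obstacle of a class of weight up to $2k$.

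The paper's proof of \cref{claim:chi11} fills precisely this gap by a different mechanism: it never assigns one colour to a union of two terms. Each term $Q_j$ (of weight $<k$) is coloured from its own nonempty list $L_j=L\setminus\bigcup_{v\in Q_j}F(v)$, and the saving of colours is obtained segmentwise. Between consecutive landmarks $q_r^i<q_{r+1}^i$ there are $n_{r+1}-n_r+1$ terms whose lists have total size strictly greater than $\abs{L}$, so two of these lists share a colour and the segment can be coloured with at most $q_{r+1}^i-q_r^i$ distinct colours; and if the term that must stay uncoloured for the next round (the non-jump sitting at the new landmark) is the one forced into a shared colour, the paper uncolours a different term and \emph{swaps} it into that position, using that the original sequence is increasing so that the swap keeps the sequence fit and preserves the relevant partial weights. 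Iterating over the $\ell-1$ segments yields at most $q_\ell^i=q_i$ colours on $P_i'$, with the final non-jump and the trailing jumps left as the $t_i+1$ leftover parts. Since this recolouring-and-swapping argument is the entire content of the claim and is absent from your write-up, the proposal has a genuine gap rather than an alternative proof.
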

		\begin{subproof}
			Let $\mac Q=(Q_1,\ldots,Q_n)$ be an increasing critical sequence of $P_i$, given by \cref{claim:critical}.
			Let $q_{1}^i,\ldots,q_{\ell}^i$
			with $0=q_{1}^i<\ldots<q_{\ell}^i\le\floor{w(P_i)/k}=p_i$ be the landmarks of $\mac Q$,
			and let $n_r:=q_{r}^i+r$ for all $r\in[\ell]$;
			then 
			\[q_{r}^i\le w_{n_r-1}(\mac Q)\le w_{n_r}(\mac Q)<(q_{r}^i+1)k\]
			by the criticality of $\mac Q$.
			Let $\mac Q^1:=\mac{Q}$.
			For $r\in[\ell-1]$,
			assume that there is a fit sequence $\mac Q^r=(Q_1^r,\ldots,Q_n^r)$ of $P_i$
			such that
			\begin{itemize}
				\item $(Q_1^r,\ldots,Q_{n_r}^r)$ is a permutation of $(Q_{1},\ldots,Q_{n_r})$;
				
				\item $Q_j^r=Q_j$ for all $j\in[n]\setminus[n_{r}]$; and
				
				\item we have assigned at most $q_{r}^i$ colors to the stable sets $Q_1^r,\ldots,Q_{n_r-1}^r$
				such that each $v\in Q_1^r\cup\cdots\cup Q_{n_r-1}^r$ gets a color not in $c'(S\cup P')\cup F(v)$.
			\end{itemize}
			
			Let $L:=\mac{C}\setminus c'(S\cup P')$.
			Because $q_h\le p_h$ for all $h\in[i-1]$ and $p\le t-1$ by~\eqref{eq:chi7}, we see that
			\[\abs{c'(P')}\le q_1+\cdots+q_{i-1}
			\le p_1+\cdots+p_{i-1}
			\le p\le t-1.\]
			Thus, as $\abs{\mac C}\ge 3k-1$, we obtain
			\begin{align*}
				\abs{L}\ge\abs{\mac C}
				-\abs{c'(S\cup P')}
				\ge \abs{\mac C}-\abs{S}-\abs{c'(P')}
				\ge\abs{\mac C}-(2k-t)-(t-1)
				=\abs{\mac C}-(2k-1)
				\ge k.
			\end{align*}
			
			For $j\in\{n_r,n_{r}+1,\ldots,n_{r+1}\}$,
			let $L_j:=L\setminus(\bigcup_{v\in Q_j^r}F(v))$;
			note that $w(Q_{n_r}^r)\le w(Q_{n_r})$
			since $\mac Q$ is increasing,
			and $w(Q_j^r)=w(Q_{j})$
			for all $j\in[n_{r+1}]\setminus[n_r]$,
			in particular
			$\abs{L_j}\ge\abs{L}-w(Q_{n_r}^r)>k-k=0$ and so $L_j$ is nonempty
			for all $j\in\{n_r,n_{r}+1,\ldots,n_{r+1}\}$.
			Since $q_{r}^i$ and $q_{r+1}^i$ are landmarks of $\mac Q$, we see that
			\begin{align*}
				\sum_{j=n_r}^{n_{r+1}}w(Q_{j}^r)
				\le\sum_{j=n_r}^{n_{r+1}}w(Q_{j})
				=w_{n_{r+1}}(\mac Q)-w_{n_r-1}(\mac Q)
				<(q_{r+1}^i+1)k-q_{r}^ik
				=(n_{r+1}-n_r)k.
			\end{align*}
			Therefore, as $\abs{L}\ge k$, we deduce that
			\begin{align*}
				\sum_{j=n_r}^{n_{r+1}}\abs{L_j}
				&\ge\sum_{j=n_r}^{n_{r+1}}(\abs{L}-w(Q_j^r))\\
				&>(n_{r+1}-n_r+1)\abs{L}-(n_{r+1}-n_r)k
				=(n_{r+1}-n_r)(\abs{L}-k)+\abs{L}
				\ge\abs{L}.
			\end{align*}
			It follows that $L_{n_r},\ldots,L_{n_{r+1}}$
			are not pairwise disjoint,
			thus there is a color common to two among them;
			and so, since they are all nonempty, we can assign at most $n_{r+1}-n_r=q_{r+1}^i-q_r^i+1$ colors to the stable sets in $\{Q_j^r:j\in\{n_r,\ldots,n_{r+1}\}\}$
			such that each $Q_{j}^r$ gets a color in $L_{j}$.
			Then
			\begin{itemize}
				\item if in fact at most $q_{r+1}^i-q_r^i$ colors are needed,
				then obviously we have assigned at most $q_{r+1}^i-q_r^i$ colors to the stable sets $Q_{n_r}^r,\ldots,Q_{n_{r+1}-1}^r$
				such that $Q_j^r$ gets a color in $L_j$
				for all $j\in\{n_r,\ldots,n_{r+1}-1\}$;
				we then uncolor $Q_{n_{r+1}}^r$ and let $\mac Q^{r+1}:=\mac Q^r$;
				
				\item if $q_{r+1}^i-q_r^i+1$ colors are needed and $Q_{n_{r+1}}^r$ gets a color not already used for $Q_{n_r}^r,\ldots,Q_{n_{r+1}-1}^r$,
				then again we have assigned at most $q_{r+1}^i-q_r^i$ colors to the stable sets $Q_{n_r}^r,\ldots,Q_{n_{r+1}-1}^r$
				such that $Q_j^r$ gets a color in $L_j$
				for all $j\in\{n_r,\ldots,n_{r+1}-1\}$;
				we then uncolor $Q_{n_{r+1}}^r$ and let $\mac Q^{r+1}:=\mac Q^r$; and
				
				\item if $q_{r+1}^i-q_r^i+1$ colors are needed and we have to color $Q_{n_{r+1}}^r$ and $Q_j^r$ with $j\in\{n_r,\ldots,n_{r+1}-1\}$ by the same color,
				then pick $j'\in\{n_r,\ldots,n_{r+1}-1\}\setminus\{j\}$
				(this is possible as $n_{r+1}-n_r=q_{r+1}^i-q_r^i+1>1$),
				uncolor $Q_{j'}^r$,
				and swap $Q_{j'}^r$ and $Q_{n_{r+1}}^r$ in $\mac Q^r$ to obtain a new fit sequence $\mac Q^{r+1}$ of $P_i$.
			\end{itemize}
			
			In this way, we have shown that there exists a fit sequence $\mac Q^{r+1}=(Q_1^{r+1},\ldots,Q_n^{r+1})$ of $P_i$ such that
			\begin{itemize}
				\item $(Q_1^{r+1},\ldots,Q_{n_{r+1}}^{r+1})$ is a permutation of $(Q_{1},\ldots,Q_{n_{r+1}})$;
				
				\item $Q_j^{r+1}=Q_j$ for all $j\in[n]\setminus[n_{r+1}]$; and
				
				\item we have assigned at most $q_r^i+(q_{r+1}^i-q_r^i)=q_{r+1}^i$ colors to the stable sets $Q_1^{r+1},\ldots,Q_{n_{r+1}-1}^{r+1}$
				such that each $v\in Q_1^{r+1}\cup\cdots\cup Q_{n_{r+1}-1}^{r+1}$ gets a color not in $c'(S\cup P')\cup F(v)$.
			\end{itemize}
			
			Iterating this procedure for $r=1,2,\ldots,\ell-1$ in turn,
			we have shown that there is a fit sequence $\mac{Q}^{\ell}=(Q_{1}^{\ell},\ldots,Q_{n}^{\ell})$
			of $P_i$ such that
			\begin{itemize}
				\item $(Q_1^{\ell},\ldots,Q_{n_{\ell}}^{\ell})$ is a permutation of $(Q_1,\ldots,Q_{n_{\ell}})$;
				
				\item $Q_j^{\ell}=Q_j$ for all $j\in[n]\setminus[n_{\ell}]$; and
				
				\item we can assign at most $q_{\ell}^i$ colors to the stable sets $Q_1^{\ell},\ldots,Q_{n_{\ell}-1}^{\ell}$ such that each $v\in Q_1^{\ell}\cup\cdots\cup Q_{n_{\ell}-1}^{\ell}$ gets a color not in $c'(S\cup P')\cup F(v)$.
			\end{itemize}
			Now, let $q_i:=q_{\ell}^i$, $t_i:=p_i-q_i=n-n_{\ell}$, $P_i':=\bigcup_{j\in[n_{\ell}-1]}Q_j^{\ell}$, and $P_{ij}:=Q_{j+n_{\ell}-1}^{\ell}$ for all $j\in[t_i+1]$;
			then
			\begin{itemize}
				\item $w(Q_{n_{\ell}}^{\ell})\le w(Q_{n_{\ell}})$ since $\mac Q$ is increasing,
				and $w_{n_{\ell}}(\mac Q^{\ell})=w_{n_{\ell}}(\mac Q)$
				since $(Q_1^{\ell},\ldots,Q_{n_{\ell}}^{\ell})$ is a permutation of $(Q_1,\ldots,Q_{n_{\ell}})$,
				therefore
				\[w(P_{i}')
				=w_{n_{\ell}-1}(\mac Q^{\ell})
				=w_{n_{\ell}}(\mac Q^{\ell})-w(Q_{n_{\ell}}^{\ell})
				\ge w_{n_{\ell}}(\mac Q)
				-w(Q_{n_{\ell}})
				= w_{n_{\ell}-1}(\mac Q)
				\ge q_{\ell}^ik=q_ik;\]
				
				\item 
				because there are exactly $\floor{w(P_i)/k}-q_i=t_i$ jumps of $\mac Q$ after $q_ik$,
				$\bigcup_{j\in[t_i+1]}P_{ij}$ is a partition of $P_i\setminus P_i'$ with 
				$w(P_{ij})=w(Q_{j+n_{\ell}-1}^{\ell})
				=w(Q_{j+n_{\ell}-1})<k$ for all $j\in[t_i+1]$; and
				
				\item we can color the vertices of $P_i'$ by at most $q_{\ell}^i=q_i$ colors such that every $v\in P_i'$ gets a color not in $c'(S\cup P')\cup F(v)$.
				This proves~\cref{claim:chi11}.
				\qedhere			
			\end{itemize}
		\end{subproof}
		The proof of~\cref{lem:reduction} is complete.
	\end{proof}
	\subsection{Finishing the proof}
	We now come to the rest of the proof of~\cref{lem:chi}.
	As explained in the previous subsection,~\cref{lem:reduction} allows us to have in mind that each $P_i$ has a partition into $p_i+1$ stable sets of weight less than $k$;
	but for technical reasons (there might be some $i$ with $w(P_i\setminus P_i')<t_ik$) it might be wise to present the full argument without assuming that.
	We shall go through several coloring steps.
	To achieve the bound $\chi\ge\abs{\mac C}-2k+3$ which is optimal as we have seen, we observe that each vertex in $P_i\setminus P_i'$ (which is yet to be colored)
	need not get a color not already used for $(S\cap S_i)\cup P_i'$.
	Then, careful calculations with the supposition $\chi\le\abs{\mac C}-2k+2$ (for a contradiction) 
	will enable us to basically work with the supposition $\chi\le\abs{\mac C}-2k+1$ as in the second step in the proof of~\cref{lem:4k-1} (with more twists).
	We obtain the constant factor $3+\frac{1}{16}$ 
	from the estimate in the last coloring step.
	Let us proceed with the details.
	\begin{proof}
		[Proof of~\cref{lem:chi}]
		Picking up where we left off, we let $P_1',\ldots,P_{\chi}'$, $P^1$, $q_1,\ldots,q_{\chi}$, $t_1,\ldots,t_{\chi}$, and $c_1$ be given by~\cref{lem:reduction}. 
		Let $S^1:=S\cup P^1$, and let $T^1:=(S^1,c_1,F\vert_{V(G)\setminus S^1})$
		be a template on $G$.
		Let $q:=q_1+\cdots+q_{\chi}$,
		then for every $i\in[\chi]$,
		\begin{equation}
			\label{eq:chi1}
			\abs{c_1(S^1\setminus S_i)}\le \abs{c(S\setminus S_i)}+\sum_{h\in[\chi]\setminus\{i\}}\abs{c_1(P_i')}
			\le\abs{S\setminus S_i}+\sum_{h\in[\chi]\setminus\{i\}}q_h
			=\abs{S}-\abs{S\cap S_i}+q-q_i.
		\end{equation}
		
		For every $i\in[\chi]$,
		let $X_i:=P_i\setminus P_i'=\bigcup_{j\in[t_i+1]}P_{ij}$ and $x_i:=w(X_i)-t_ik$.
		Then $0\le x_i<k$; and since $w(P_i')\ge q_ik$, we have
		\[w(P_i)-x_i=w(P_i)-w(X_i)+t_ik
		=w(P_i')+t_ik
		\ge (q_i+t_i)k
		=p_ik.\]
		Let $t':=t-p$; then $t'\ge1$ by~\eqref{eq:chi7}, and 
		\[kt>\cost(T)-k\abs{S}
		=\sum_{i\in[\chi]}w(P_i)
		\ge \sum_{i\in [\chi]}(p_ik+x_i)
		=pk+\sum_{i\in [\chi]}x_i\]
		by~\eqref{eq:chi6}, which implies
		\begin{equation}
			\label{eq:chi9}
			kt'=k(t-p)>\sum_{i\in [\chi]}x_i.
		\end{equation}
		
		Now, let $I_0:=\{i\in[\chi]:t_i=0\}$;
		by adding isolated vertices we may assume $X_i\ne\emptyset$ for all $i\in I_0$.
		Let $I_2$ be a maximal subset of $[\chi]$ such that we can properly color $\bigcup_{i\in I_2}X_i$ by $\sum_{i\in I_2}t_i$ colors 
		so that for every $i\in I_2$, each $v\in X_i$ gets a color not in $c_1(S^1\setminus S_i)\cup F(v)$.
		Then $I_0,I_2$ are disjoint.
		Let $I_1:=[\chi]\setminus(I_0\cup I_2)$,
		and let $s_1:=\sum_{i\in I_1}t_i$ and $s_2:=\sum_{i\in I_2}t_i$.
		Let $P^2:=\bigcup_{i\in I_2}X_i$,
		and let $c_2$ be a proper coloring of $G[S^1\cup P^2]$ repsecting $T_{S^1\cup P^2}^1$ with $\abs{c_2(P^2)}\le s_2$
		whose existence is guaranteed by the definition of $I_2$. 
		Observe that 
		\[s_1+s_2=\sum_{i\in I_1}t_i+\sum_{i\in I_2}t_i=\sum_{i\in[\chi]}t_i
		=\sum_{i\in[\chi]}(p_i-q_i)
		=p-q.\]
		Let $S^2:=S^1\cup P^2$, and let $T^2:=(S^2,c_2,F\vert_{V(G)\setminus S^2})$
		be a template on $G$.
		Let $I:=I_0\cup I_1$,
		then $I\cup I_2$ is a partition of $[\chi]$.
		For every $i\in I$,
		as $\abs{c_1(S^1\setminus S_i)}\le\abs{S}-\abs{S\cap S_i}+q-q_i$ by~\eqref{eq:chi1}
		and $s_1+s_2=p-q$,
		\begin{equation}
			\label{eq:chi3}
			\begin{aligned}
				\abs{c_2(S^2\setminus S_i)}
				\le\abs{c_1(S^1\setminus S_i)}+\abs{c_2(P^2)}
				&\le\abs{S}-\abs{S\cap S_i}+q-q_i+s_2\\
				&=(2k-t'-p)-\abs{S\cap S_i}+q-q_i+(p-q-s_1)\\
				&=2k-t'-s_1-\abs{S\cap S_i}-q_i.
			\end{aligned}
		\end{equation}
		\begin{claim}
			\label{claim:chi13}
			$x_i\ge t_i(\abs{\mac C}-3k+t'+s_1+\abs{S\cap S_i}+q_i)$
			for all $i\in I$.
		\end{claim}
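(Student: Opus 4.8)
The plan is to prove~\cref{claim:chi13} by contradiction, exploiting the maximality of $I_2$: if the asserted inequality failed for some $i\in I$, then $X_i$ could be coloured cheaply enough that $I_2\cup\{i\}$ would still be admissible, which is impossible. For $i\in I_0$ the inequality reads $x_i\ge 0$, and this holds since $t_i=0$ gives $x_i=w(X_i)\ge 0$. So assume $i\in I_1$, hence $t_i\ge1$, write $N:=\abs{\mac C}-3k+t'+s_1+\abs{S\cap S_i}+q_i$, and suppose $x_i<t_iN$; since $0\le x_i$ and $t_i\ge1$ this already forces $N\ge1$. I would set $L_i:=\mac C\setminus c_2(S^2\setminus S_i)$, so that~\eqref{eq:chi3} gives $\abs{L_i}\ge\abs{\mac C}-(2k-t'-s_1-\abs{S\cap S_i}-q_i)=N+k\ge k+1$; in particular $\abs{L_i}>k-1\ge\abs{F(v)}$ for every $v\in X_i$, by the goodness of $T$.

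The main step is to show that $X_i$ admits a proper colouring using at most $t_i$ colours of $L_i$ in which every $v\in X_i$ receives a colour outside $F(v)$. Granting this, I would argue as follows. Since $c_1(S^1\setminus S_i)=c_2(S^1\setminus S_i)\subseteq c_2(S^2\setminus S_i)$, these colours also avoid $c_1(S^1\setminus S_i)$; and since $i\notin I_2$ we have $P^2=\bigcup_{i'\in I_2}X_{i'}\subseteq S^2\setminus S_i$, so the set of colours used on $X_i$ is disjoint from $c_2(P^2)$ and causes no conflict with $c_2$ on $S^2$. Hence this colouring, glued with $c_2\vert_{P^2}$, properly colours $G[\bigcup_{i'\in I_2\cup\{i\}}X_{i'}]$, uses at most $s_2+t_i=\sum_{i'\in I_2\cup\{i\}}t_{i'}$ colours, and respects all the list constraints, so $I_2\cup\{i\}$ satisfies the defining property of $I_2$ — contradicting its maximality. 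Therefore $x_i\ge t_iN$, as claimed.

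To build the colouring of $X_i$ I would proceed greedily, picking distinct colours $\gamma_1,\gamma_2,\dots\in L_i$ one at a time. Writing $Y_{r-1}:=\{v\in X_i:\{\gamma_1,\dots,\gamma_{r-1}\}\subseteq F(v)\}$ for the not-yet-freed vertices, the first choice satisfies
\[\abs{Y_1}\le\frac{1}{\abs{L_i}}\sum_{v\in X_i}\abs{F(v)\cap L_i}\le\frac{w(X_i)}{\abs{L_i}}=\frac{t_ik+x_i}{\abs{L_i}}<\frac{t_i(k+N)}{N+k}=t_i,\]
so $\abs{Y_1}\le t_i-1$; and for $r\ge2$, averaging $\abs{\{v\in Y_{r-1}:\gamma\in F(v)\}}$ over $\gamma\in L_i\setminus\{\gamma_1,\dots,\gamma_{r-1}\}$ gives a value at most $\abs{Y_{r-1}}(k-r)/(\abs{L_i}-r+1)<\abs{Y_{r-1}}$, where $\abs{L_i}\ge k+1$ is used, so $\gamma_r$ can be chosen with $\abs{Y_r}\le\abs{Y_{r-1}}-1$. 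Thus $\abs{Y_r}$ drops by at least one at each step, and since $\abs{Y_1}\le t_i-1$ (and since exhausting all of $L_i$ would already free every vertex, as $\abs{F(v)}<\abs{L_i}$) the process terminates with $Y_m=\emptyset$ for some $m\le t_i$. Colouring each $v\in X_i$ with some colour of $\{\gamma_1,\dots,\gamma_m\}\setminus F(v)$ completes the construction.

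The part I expect to require the most care is this greedy estimate: one must notice that the hypothesis $x_i<t_iN$ is exactly what makes $w(X_i)/\abs{L_i}<t_i$, and hence caps $\abs{Y_1}$ at $t_i-1$, while the inequality $\abs{L_i}>k-1\ge\abs{F(v)}$ is precisely what keeps the uncovered set strictly shrinking at each step. I would also recheck the bookkeeping in~\eqref{eq:chi3} and the containments $P^2\subseteq S^2\setminus S_i$ and $c_1(S^1\setminus S_i)\subseteq c_2(S^2\setminus S_i)$, since these are what make the resulting colouring genuinely eligible to witness the admissibility of $I_2\cup\{i\}$, and therefore the contradiction with the choice of $I_2$.
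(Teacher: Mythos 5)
Your proof is correct, and while it follows the same overall strategy as the paper --- assume the inequality fails for some $i$, note this forces $i\in I_1$, take the list $L=\mac C\setminus c_2(S^2\setminus S_i)$ of size at least $N+k$ via~\eqref{eq:chi3} (where $N$ is the bracketed quantity), colour $X_i$ with at most $t_i$ admissible colours, and contradict the maximality of $I_2$ by gluing with $c_2\vert_{P^2}$ --- the key colouring step is carried out by a genuinely different mechanism. The paper uses the partition $X_i=\bigcup_{j\in[t_i+1]}P_{ij}$ supplied by \cref{lem:reduction}: it defines a list $L_j$ for each part, checks each $L_j$ is nonempty, and shows $\sum_j\abs{L_j}>\abs{L}$ (using $x_i<t_iN$), so two of the $t_i+1$ lists share a colour and the parts can be coloured monochromatically with at most $t_i$ colours. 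You ignore the partition entirely and instead run a greedy selection of distinct colours $\gamma_1,\gamma_2,\ldots\in L$, tracking the set of vertices whose forbidden lists still contain all chosen colours: the first averaging step uses exactly $w(X_i)=t_ik+x_i<t_i(N+k)\le t_i\abs{L}$ to cap this set at $t_i-1$, and the goodness bound $\abs{F(v)}\le k-1<\abs{L}$ makes it shrink by at least one per step, so at most $t_i$ colours suffice; the gluing and bookkeeping ($P^2\subset S^2\setminus S_i$, $c_1(S^1\setminus S_i)\subset c_2(S^2\setminus S_i)$, $s_2+t_i$ colours in total) are checked correctly. The paper's route is a three-line pigeonhole once \cref{lem:reduction} is in hand, and it reuses structure that is needed later anyway; your route is slightly longer but more self-contained, showing that for this particular claim only the aggregate weight $w(X_i)$ and the goodness of $T$ matter, not the decomposition of $X_i$ into the sets $P_{ij}$.
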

		\begin{subproof}
			Suppose that there is $i\in I$ such that $x_i<t_i(\abs{\mac C}-3k+t'+s_1+\abs{S\cap S_i}+q_i)$;
			then $i\in I_1$.
			Let $L:=\mac C\setminus c_2(S^2\setminus S_i)$.
			Since $\abs{c_2(S^2\setminus S_i)}\le 2k-t'-s_1-\abs{S\cap S_i}-q_i$ by~\eqref{eq:chi3}, we see that
			\begin{align*}
				\abs{L}&\ge\abs{\mac C}-\abs{c_2(S^2\setminus S_i)}\\
				&\ge \abs{\mac C}-(2k-t'-s_1-\abs{S\cap S_i}-q_i)
				=\abs{\mac C}-2k+t'+s_1+\abs{S\cap S_i}+q_i,
			\end{align*}
			therefore $\abs{L}\ge\abs{\mac C}-2k+1$ since $t'\ge1$.
			For every $j\in[t_i+1]$, let $L_{j}:=L\setminus(\bigcup_{v\in P_{ij}}F(v))$,
			then 
			\[\abs{L_j}\ge\abs{L}-w(P_{ij})
			\ge \abs{\mac C}-2k+1-(k-1)
			=\abs{\mac C}-(3k-2)>0.\]
			As $p_i=q_i+t_i$ and by our supposition on $x_i$, we have that
			\begin{align*}
				\sum_{j\in[t_i+1]}\abs{L_j}
				\ge (t_i+1)\abs{L}-\sum_{j\in[t_i+1]}w(P_{ij})
				&\ge\abs{L}+t_i(\abs{\mac C}-2k+t'+s_1+\abs{S\cap S_i}+q_i)-(x_i+t_ik)\\
				&=\abs{L}+t_i(\abs{\mac C}-3k+t'+s_1+\abs{S\cap S_i}+q_i)-x_i
				>\abs{L}
			\end{align*}
			so $L_1,\ldots,L_{t_i+1}$ are nonempty and not pairwise disjoint.
			Therefore we can assign at most $t_i$ colors to the stable sets in $\{P_{ij}:j\in[t_i+1]\}$
			so that each $P_{ij}$ gets a color in $L_j$,
			and so $I_2\cup\{i\}$ contradicts the maximality of $I_2$.
			This proves~\cref{claim:chi13}.
		\end{subproof}
		
		For each $i\in I$, assume $w(P_{i1})=\min_{j\in[t_i+1]}w(P_{ij})=:y_i$,
		and let $Y_i:=P_{i1}$;
		then $y_i=w(Y_i)$.
		Let $P^3:=\bigcup_{i\in I_1}(X_i\setminus Y_i)$;
		we next extend $c_2$ to $P^3$ by at most $s_1$ colors, as follows.
		\begin{claim}
			\label{claim:chi15}
			There is a proper coloring $c_3$ of $G[S^2\cup P^3]$ respecting $T^2_{S^2\cup P^3}$
			and satisfying $\abs{c_3(X_i\setminus Y_i)}\le t_i$ for all $i\in I_1$.
		\end{claim}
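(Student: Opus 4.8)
The plan is to build $c_3$ by extending $c_2$ one index at a time: fix an arbitrary order on $I_1$, and when we reach $i\in I_1$, color the $t_i$ stable sets $P_{i2},\ldots,P_{i,t_i+1}$ that make up $X_i\setminus Y_i$, giving each $P_{ij}$ a single color and using exactly $t_i$ distinct colors, so that exactly $t_i$ colors appear on $X_i\setminus Y_i$ as required. Since $X_i\subset S_i$ is a stable set, $X_i\setminus Y_i$ spans no edges, so the only constraints are: (a) the color chosen for $P_{ij}$ must avoid $\bigcup_{v\in P_{ij}}F(v)$ together with the colors $c_2(S^2\setminus S_i)$ of the neighbours of $P_{ij}$ inside $S^2$ (using that every neighbour of a vertex of $X_i\subset S_i$ lies outside $S_i$); and (b) it must avoid the set $U$ of colors already used by the partial $c_3$ on the previously processed classes $X_{i'}\setminus Y_{i'}$. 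Writing $\tilde L:=\mac C\setminus(c_2(S^2\setminus S_i)\cup U)$ and $\Lambda_{ij}:=\tilde L\setminus\bigcup_{v\in P_{ij}}F(v)$, the step reduces to choosing a system of distinct representatives of $\Lambda_{i2},\ldots,\Lambda_{i,t_i+1}$.

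The key point will be a lower bound on $\abs{\tilde L}$. Each previously processed $X_{i'}\setminus Y_{i'}$ contributes at most $t_{i'}$ colors to $U$, so $\abs{U}\le\sum_{i'\in I_1\setminus\{i\}}t_{i'}=s_1-t_i$; meanwhile \eqref{eq:chi3} gives $\abs{c_2(S^2\setminus S_i)}\le 2k-t'-s_1-\abs{S\cap S_i}-q_i$, the term $-s_1$ reflecting that a large $s_1$ forces $c_2$ to spend few colors on $P^2$. The two occurrences of $s_1$ cancel when these bounds are combined:
\[\abs{\tilde L}\ge\abs{\mac C}-2k+t'+\abs{S\cap S_i}+q_i+t_i.\]
Since $w(P_{ij})\le k-1$ for every $j$, for any nonempty $J\subset\{2,\ldots,t_i+1\}$ the intersection $\bigcap_{j\in J}\bigcup_{v\in P_{ij}}F(v)$ has at most $k-1$ elements, so $\abs{\bigcup_{j\in J}\Lambda_{ij}}\ge\abs{\tilde L}-(k-1)$. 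Because $t'\ge1$ by \eqref{eq:chi7} and $\abs{\mac C}\ge3k-1$, this is at least $t_i\ge\abs{J}$, so Hall's condition holds (in particular each $\Lambda_{ij}$ is nonempty) and the sought distinct colors exist.

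Iterating over $i\in I_1$ then produces a coloring $c_3$ of $G[S^2\cup P^3]$ that agrees with $c_2$ on $S^2$: edges inside $S^2$ are fine since $c_2$ is proper, edges from $S^2$ into $P^3$ are handled by (a), and edges inside $P^3$ — which run only between two distinct classes $X_i,X_{i'}$, the later-processed of which avoids $U$ — are handled by (b); moreover each $v\in P^3$ avoids $F(v)$ by construction, so $c_3$ respects $T^2_{S^2\cup P^3}$, and exactly $t_i$ colors appear on each $X_i\setminus Y_i$. The one step I would check carefully is the bookkeeping that makes the two $s_1$-terms cancel, which is precisely why \cref{claim:chi15} costs nothing in the color budget; everything else is a routine Hall-type argument, and in fact this proof uses only $\abs{\mac C}\ge3k-1$ rather than the stronger hypothesis of \cref{lem:chi}.
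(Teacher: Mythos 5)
Your proof is correct and is essentially the paper's argument: the paper bounds each list $L_{ij}=\mac C\setminus(c_2(S^2\setminus S_i)\cup\bigcup_{v\in P_{ij}}F(v))$ below by $\abs{\mac C}-(2k-t'-s_1)-(k-1)\ge s_1$ via \eqref{eq:chi3} and greedily assigns pairwise distinct colors to all $s_1$ sets $P_{ij}$ in one pass, which is the same $s_1$-cancellation you carry out class-by-class. Your sequential bookkeeping with Hall's theorem is only a cosmetic variant (and is overkill: after deleting the at most $s_1-t_i$ colors in $U$, every list still has more than $t_i$ colors, so greedy suffices), and both proofs indeed use only $\abs{\mac C}\ge 3k-1$.
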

		\begin{subproof}
			Note that for every $i\in I_1$, $P^3\cap S_i=X_i\setminus Y_i$ is the disjoint union of $t_i$ stable sets of the form $P_{ij}$ where $j\in[t_i+1]\setminus\{1\}$.
			For $i\in I_1$ and $j\in[t_i+1]\setminus\{1\}$,
			let $L_{ij}:=\mac C\setminus(c_2(S^2\setminus S_i)\cup\bigcup_{v\in P_{ij}}F(v))$.
			Then
			since $\abs{c_2(S^2\setminus S_i)}\le2k-t'-s_1$ by~\eqref{eq:chi3},
			since $p_i=q_i+t_i$,
			and since $\abs{\mac C}\ge3k-1$,
			we have
			\begin{align*}
				\abs{L_{ij}}&\ge\abs{\mac C}-\abs{c_2(S^2\setminus S_i)}-w(P_{ij})\\
				&\ge\abs{\mac C}-(2k-t'-s_1)-(k-1)
				=\abs{\mac C}-(3k-1)+t'
				+s_1
				\ge s_1.
			\end{align*}
			Hence we can assign $\sum_{t\in I_1}t_i$ different colors to the stable sets in $\bigcup_{i\in I_1}\{P_{ij}:j\in[t_i+1]\setminus\{1\}\}$
			such that each $P_{ij}$ gets a color in $L_{ij}$.
			This proves~\cref{claim:chi15}.
		\end{subproof}
		Let $S^3:=S^2\cup P^3$,
		and let $T^3:=(S^3,c_3,F\vert_{V(G)\setminus S^3})$ be the template on $G$ with $c_3$ given by~\cref{claim:chi15}.
		For $i\in I$,
		since
		$\abs{c_3(P^3\setminus S_i)}
		\le\sum_{h\in I_1\setminus\{i\}}\abs{c_3(X_h\setminus Y_h)}
		\le\sum_{h\in I_1\setminus\{i\}}t_h
		=s_1-t_i$,
		since $\abs{c_2(S^2\setminus S_i)}\le2k-t'-\abs{S\cap S_i}-q_i-s_1$ by~\eqref{eq:chi3},
		and since $p_i=q_i+t_i$,
		we have that
		\begin{equation}
			\label{eq:chi4}
			\begin{aligned}
				\abs{c_3(S^3\setminus S_i)}
				&\le\abs{c_2(S^2\setminus S_i)}+\abs{c_3(P^3\setminus S_i)}\\
				&\le(2k-t'-\abs{S\cap S_i}-q_i-s_1)+(s_1-t_i)
				=2k-t'-\abs{S\cap S_i}-p_i.
			\end{aligned}
		\end{equation}
		
		So far our arguments only concern $\abs{\mac C}$ instead of its relation to $\chi$.
		Now, assume for a contradiction that $\chi\le\abs{\mac C}-2k+2$.
		Let $L_i:=\mac C\setminus(c_3(S^3\setminus S_i)\cup\bigcup_{v\in Y_i}F(v))$
		for all $i\in I$.
		As promised, the following claim will enable us to basically work with the supposition $\chi\le\abs{\mac C}-2k+1$.
		\begin{claim}
			\label{claim:chi16}
			There exists $J\subset I$ with $I_1\subset J$, $\abs{J}\le\abs{\mac C}-2k+1$,
			and $\abs{L_i}\ge\abs{I}$ for all $i\in I\setminus J$.
		\end{claim}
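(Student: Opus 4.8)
The plan is to dispose of the easy regime first and then reduce the claim to a single counting estimate. If $\abs{I}\le\abs{\mac C}-2k+1$ we may simply take $J:=I$, since then $I_1\subset J$, $\abs{J}\le\abs{\mac C}-2k+1$, and $I\setminus J=\emptyset$. So assume $\abs{I}\ge\abs{\mac C}-2k+2$; as $I\subset[\chi]$ and we are supposing $\chi\le\abs{\mac C}-2k+2$, this forces $\abs{I}=\chi=\abs{\mac C}-2k+2$, whence $I=[\chi]$, $I_2=\emptyset$, and $p=q+s_1$ where $q:=q_1+\cdots+q_\chi$. In this regime it is enough to exhibit one index $i^\ast\in I_0$ with $\abs{L_{i^\ast}}\ge\chi$: then $J:=I\setminus\{i^\ast\}$ contains $I_1$, has size exactly $\abs{\mac C}-2k+1$, and $\abs{L_{i^\ast}}\ge\chi=\abs{I}$ for the unique member of $I\setminus J$.

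Before that I would isolate two preliminary facts. First, $s_1\le k-1$: this is clear if $I_1=\emptyset$, and otherwise any $i\in I_1$ has $t_i\ge1$, so~\cref{claim:chi13} together with $t'\ge1$ and $\abs{\mac C}\ge3k-1$ gives $x_i\ge s_1$ while $x_i<k$. Consequently $\abs{I_1}\le s_1\le k-1$ and $\abs{I_0}=\chi-\abs{I_1}\ge\abs{\mac C}-3k+3\ge2$, so in particular $I_0\ne\emptyset$ and $\chi-s_1\ge2$. Second, for $i\in I_0$ one has $t_i=0$, hence $Y_i=X_i$ and $w(Y_i)=x_i$; substituting the bound~\eqref{eq:chi4} on $\abs{c_3(S^3\setminus S_i)}$ into the trivial estimate $\abs{L_i}\ge\abs{\mac C}-\abs{c_3(S^3\setminus S_i)}-w(Y_i)$ gives $\abs{L_i}\ge\abs{\mac C}-2k+t'+\abs{S\cap S_i}+p_i-x_i$. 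Since $\chi=\abs{\mac C}-2k+2$, it is therefore enough to find $i\in I_0$ with $x_i\le t'+\abs{S\cap S_i}+p_i-2$.

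The core of the proof is to show that such an $i$ must exist. Suppose not, so $x_i\ge t'+\abs{S\cap S_i}+p_i-1$ for every $i\in I_0$. I would sum this inequality over $I_0$ and add to it the bound of~\cref{claim:chi13} summed over $I_1$, which (using $t_i\ge1$) is at least $s_1(\abs{\mac C}-3k+t'+s_1)+\sum_{i\in I_1}(\abs{S\cap S_i}+q_i)$; then the identities $\sum_{i\in[\chi]}\abs{S\cap S_i}=\abs{S}=2k-t'-p$, $\sum_{i\in I_0}p_i+\sum_{i\in I_1}q_i=q$, and $p=q+s_1$ cancel the dependence on $p$ and $q$ and leave
\[\sum_{i\in[\chi]}x_i\ge\abs{I_0}(t'-1)+s_1\bigl(\abs{\mac C}-3k+t'+s_1-1\bigr)+2k-t'.\]
Combining this with~\eqref{eq:chi9}, with $\abs{I_0}\ge\chi-s_1$, and with $\chi=\abs{\mac C}-2k+2$, the inequality simplifies to
\[t'\bigl(3k-1-\abs{\mac C}\bigr)>4k-2-\abs{\mac C}+s_1\bigl(\abs{\mac C}-3k+s_1\bigr);\]
writing $\abs{\mac C}=3k-1+e$ with $e\ge0$, this rearranges to $0>(k-1)+s_1(s_1-1)+e(s_1+t'-1)$, which is impossible since every summand on the right is nonnegative. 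This contradiction produces the required $i^\ast$, and hence $J$.

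The step I expect to demand the most care is the final estimate. The key is to carry the $p_i$ (equivalently $q_i$) contributions all the way through — in the lower bound for $\abs{L_i}$ and in both summations — instead of discarding them, since it is precisely the relation $\abs{S}+q=2k-t'-s_1$ that annihilates the dependence on $p$ and yields an inequality tight enough to be contradictory already under $\abs{\mac C}\ge3k-1$ (so the sharper constant $3+\frac{1}{16}$ plays no role here). One should also verify that $\abs{I_0}\ge\chi-s_1$ is genuinely usable, which rests on $s_1\le k-1<\chi$, and that the degenerate case $I_1=\emptyset$ — where $s_1=0$ and the~\cref{claim:chi13} summation is empty — is covered by the identical computation.
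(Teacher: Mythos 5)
Your proposal is correct and follows essentially the same route as the paper: dispose of the case $\abs{J}=\abs{I}\le\abs{\mac C}-2k+1$ by taking $J=I$, then in the remaining case force $\abs{I}=\chi=\abs{\mac C}-2k+2$ and $I_2=\emptyset$, and find a single $i\in I_0$ with $\abs{L_i}\ge\abs{I}$ by supposing otherwise, lower-bounding $\sum_{i}x_i$ via~\eqref{eq:chi4} and~\cref{claim:chi13}, and contradicting~\eqref{eq:chi9}. The only difference is bookkeeping in the final summation (you keep the full~\cref{claim:chi13} term over $I_1$ and use $\abs{I_0}\ge\chi-s_1$ together with $s_1\le k-1$, whereas the paper applies the uniform bound $x_i\ge t'-1+\abs{S\cap S_i}+p_i$ to all of $I$ and uses $\abs{I}\ge k+1$); your algebra checks out and, like the paper's, needs only $\abs{\mac C}\ge 3k-1$.
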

		\begin{subproof}
			If $\abs{I}\le\abs{\mac C}-2k+1$ then we can just take $J$ to be $I$;
			so we may assume $\abs{I}\ge\abs{\mac C}-2k+2$,
			then $\abs{I}=\chi=\abs{\mac C}-2k+2$, $I=[\chi]$, and $I_2=\emptyset$.
			It suffices to show that there exists $i\in I_0$ with $\abs{L_i}\ge\abs{I}$;
			then we can take $J:=I\setminus\{i\}$.
			Suppose not;
			then $\abs{L_i}<\abs{I}=\abs{\mac C}-2k+2$ for all $i\in I_0$, so since $x_i=y_i$ for $i\in I_0$ and $\abs{c_3(S^3\setminus S_i)}\le 2k-t'-\abs{S\cap S_i}-p_i$
			by~\eqref{eq:chi4},
			\begin{align*}
				x_i=y_i\ge\abs{\mac{C}}-\abs{c_3(S^3\setminus S_i)}-\abs{L_i}
				&>\abs{\mac C}-(2k-t'-\abs{S\cap S_i}-p_i)-(\abs{\mac C}-2k+2)\\
				&=t'-2+\abs{S\cap S_i}+p_i,
			\end{align*}
			hence $x_i\ge t'-1+\abs{S\cap S_i}+p_i$ for all $i\in I_0$.
			For each $i\in I_1$, since $\abs{\mac C}\ge3k-1$, since $s_1+q_i\ge t_i+q_i=p_i$, and since $t_i\ge1$,
			by~\cref{claim:chi13} we have that 
			\[x_i\ge t_i(\abs{\mac C}-3k+t'+s_1+\abs{S\cap S_i}+q_i)
			\ge t'-1+\abs{S\cap S_i}+p_i.\]
			Hence, since
			$\abs{I}=\chi=\abs{\mac C}-2k+2\ge k+1$
			and $\sum_{i\in I}\abs{S\cap S_i}=\abs{S}=2k-t'-p$,
			we obtain
			\begin{align*}
				\sum_{i\in I}x_i
				&\ge \sum_{i\in I}(t'-1+\abs{S\cap S_i}+p_i)\\
				&=\abs{I}(t'-1)+\abs{S}+p
				\ge (k+1)(t'-1)+(2k-t'-p)+p
				=kt'+k-1\ge kt',
			\end{align*}
			contradicting~\eqref{eq:chi9}.
			This proves~\cref{claim:chi16}.
		\end{subproof}
		Let $J$ be the subset of $I$ obtained from~\cref{claim:chi16},
		and let $I':=\{i\in J:x_i\ge t'\}$;
		then $I_1\subset I'$ as
		$x_i\ge t_i(\abs{\mac C}-3k+t'+s_1)
		\ge t'$ for all $i\in I_1$ by~\cref{claim:chi13} 
		since $s_1\ge t_i\ge 1$ and $\abs{\mac C}\ge 3k-1$.
		The following claim is sufficient to complete the proof of~\cref{lem:chi}.
		\begin{claim}
			\label{claim:chi17}
			It is possible to assign at most $\abs{I'}$ different colors to the stable sets $\{Y_i:i\in I'\}$
			so that each $Y_i$ gets a color in $L_i$.
		\end{claim}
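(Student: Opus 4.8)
The plan is to prove~\cref{claim:chi17} by a greedy colouring, in the spirit of the last part of the proof of~\cref{lem:4k-1}, and the work lies almost entirely in a single estimate for the indices in $I_1$. Relabel $I'=\{1,\ldots,n\}$ so that $x_1\ge x_2\ge\cdots\ge x_n$, and process $Y_1,\ldots,Y_n$ in this order, giving each $Y_i$ a colour from $L_i$ that avoids the at most $i-1$ colours already used; since every $Y_i$ is a stable set contained in $S_i$, this produces a proper extension of $c_3$ using at most $n=\abs{I'}$ colours, and it succeeds provided $\abs{L_i}\ge i$ for every $i\in[n]$. So the claim reduces to showing $\abs{L_i}\ge i$, for which I would use three inputs. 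First, by~\eqref{eq:chi9} and the ordering, $ix_i\le\sum_{h=1}^{i}x_h\le\sum_{h\in[\chi]}x_h<kt'$, so $i<kt'/x_i$; since also $x_h\ge t'$ for every $h\in I'$, this gives $n<k$. Second, by~\eqref{eq:chi4} and the goodness of $T$,
\[
\abs{L_i}\;\ge\;\abs{\mac C}-\abs{c_3(S^3\setminus S_i)}-w(Y_i)\;\ge\;\abs{\mac C}-2k+t'+\abs{S\cap S_i}+p_i-y_i,
\]
where $y_i:=w(Y_i)$. Third, since $X_i=\bigcup_{\ell\in[t_i+1]}P_{i\ell}$ and $Y_i=P_{i1}$ has the least weight among these pieces, $(t_i+1)y_i\le w(X_i)=x_i+t_ik$, i.e.\ $y_i\le\frac{x_i+t_ik}{t_i+1}=k-\frac{k-x_i}{t_i+1}$.

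For $i\in I_0$ (so $t_i=0$, $p_i=q_i$, $y_i=x_i$) these combine cleanly: $y_i+i<x_i+\frac{kt'}{x_i}\le k+t'$, because $x\mapsto x+\frac{kt'}{x}$ is at most $k+t'$ on $[t',k]$, so $y_i+i\le k+t'-1\le\abs{\mac C}-2k+t'+\abs{S\cap S_i}+p_i$ using only $\abs{\mac C}\ge 3k-1$, and hence $\abs{L_i}\ge i$. The case $i\in I_1$ is where the full hypothesis enters. Here $t_i\ge1$, and by~\cref{claim:chi13} together with $\abs{\mac C}\ge(3+\frac1{16})k-1$ one has $x_i\ge t_iD_i$ with $D_i:=\abs{\mac C}-3k+t'+s_1+\abs{S\cap S_i}+q_i\ge\frac1{16}k+t'+s_1-1$; then $x_i<k$ forces $t_iD_i<k$, so in particular $t_i\le 15$ and $t'$ is constrained in terms of $t_i$. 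The target $y_i+i\le\abs{\mac C}-2k+t'+\abs{S\cap S_i}+p_i$ becomes, via the first and third inputs, the problem of bounding $\frac{x_i+t_ik}{t_i+1}+\frac{kt'}{x_i}$ over the feasible range $t_iD_i\le x_i<k$; this function is maximised at the two endpoints, the endpoint $x_i\to k$ being handled exactly as in the $I_0$ case, while at $x_i=t_iD_i$, after using $D_i+k-\frac{t_i(D_i+k)}{t_i+1}=\frac{D_i+k}{t_i+1}$ and $\abs{\mac C}-2k+t'+\abs{S\cap S_i}+q_i=D_i+k-s_1$, the desired bound reduces to the single inequality
\[
\frac{kt'}{t_iD_i}+(s_1-t_i)-1\;\le\;\frac{D_i+k}{t_i+1}.
\]

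Verifying this last inequality is the heart of the matter, and I expect it to be the main obstacle: one has to feed in $D_i\ge\frac1{16}k+t'+s_1-1$, the constraint $t_iD_i<k$, and the bound $s_1<16t'$ coming from~\eqref{eq:chi9} and~\cref{claim:chi13}, and reconcile the three competing quantities $\abs{L_i}$, the rank $i$, and $y_i$ in the regime where $x_i$, $t'$ and $s_1$ are all of order $k$. This is precisely the place where~\cref{lem:4k-1} had room to spare, and the constant $3+\frac1{16}$ is exactly what is needed to make the endpoint estimate close; any smaller constant would leave it just out of reach. Granting this inequality, $\abs{L_i}\ge i$ holds in all cases, the greedy colouring goes through, and~\cref{claim:chi17} follows.
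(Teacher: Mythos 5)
Your setup (one global greedy pass over $I'$ in decreasing order of $x_i$, demanding $\abs{L_i}\ge i$ for every $i$) and your treatment of the indices in $I_0$ are fine, but the case $i\in I_1$ --- which you yourself identify as the heart of the matter --- is not proved: you reduce it to the endpoint inequality $\frac{kt'}{t_iD_i}+(s_1-t_i)-1\le\frac{D_i+k}{t_i+1}$ and then explicitly ``grant'' it. This is a genuine gap, and it cannot be closed in the form you propose, because that inequality is false in parameter ranges compatible with all your constraints. Take $t_i=1$, $s_1=1$, $\abs{S\cap S_i}=q_i=0$, $d=\abs{\mac C}-3k+1=\frac{1}{16}k$ and $t'$ of order $\frac{1}{4}k$, so that $D_i=d+t'=\frac{5}{16}k$ and $t_iD_i<k$: the left side is $\frac{kt'}{D_i}-1=\frac{4}{5}k-1$ while the right side is $\frac{D_i+k}{2}=\frac{21}{32}k$. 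In this regime your lower bound on $\abs{L_i}$ is only about $\frac{21}{32}k$ while the worst-case global rank $i<kt'/x_i$ can approach $\frac{4}{5}k$, so your chain of estimates cannot establish $\abs{L_i}\ge i$ there; the constant $3+\frac{1}{16}$ does not rescue the step where you place it.

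This is precisely why the paper does not run a single global greedy. It splits into cases: when $t'\le d'$ (\cref{claim:chi12}) the global greedy does work, via $y_i+i<k+t'+\frac{(k-x_i)((t_i+1)t'-x_i)}{x_i(t_i+1)}\le k+t'$; when $s\le d+s(t'+d')/k$ (\cref{claim:chi14}) --- which contains the regime above --- it first colors the sets $Y_i$ with $i\in I_1$ using at most $\abs{I_1}$ colors, needing only $\abs{L_i}\ge d+t'\ge\abs{I_1}$ and imposing no rank condition on these indices, and then greedily colors $I'\setminus I_1\subset I_0$ using the strengthened rank bound $i<(kt'-s_1(t'+d'))/x_i$ coming from \eqref{eq:chi9} after subtracting the weight already accounted for by $I_1$, together with convexity of $\phi(x)=x+\frac{kt'-s_1(t'+d')}{x}$; this gives $\abs{L_i}-i\ge s_1\ge\abs{I_1}$, enough to avoid the colors already used on $I_1$. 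Only in the remaining case does the paper prove $\abs{L_i}\ge i$ for the global ordering, and there the work is the inequality \eqref{eq:chi8}, established by AM--GM and the identity $4s^2-k(s-d)=4(s-\frac{1}{8}k)^2+k(d-\frac{1}{16}k)\ge0$ --- which is where $3+\frac{1}{16}$ actually enters. So your $I_0$ analysis matches one branch of the paper's argument, but without the case analysis and the two-stage coloring of $I_1$ versus $I_0\cap I'$ your argument for $I_1$ cannot be completed.
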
 
		To see how~\cref{claim:chi17} leads to a contradiction and finishes the proof of~\cref{lem:chi},
		observe that for every $i\in J\setminus I'\subset I_0$, since $y_i=x_i\le t'-1$
		and $\abs{c_3(S^3\setminus S_i)}\le 2k-t'$ by~\eqref{eq:chi4},
		\[\abs{L_i}\ge\abs{\mac C}-\abs{c_3(S^3\setminus S_i)}-y_i
		\ge \abs{\mac C}-(2k-t')-(t'-1)
		=\abs{\mac C}-2k+1
		\ge\abs{J},\]
		where the last inequality holds by \cref{claim:chi16}.
		Hence, \cref{claim:chi17} allows us to assign at most $\abs{J}$ different colors to the stable sets in $\{Y_i:i\in J\}$ so that each $Y_i$ gets a color in $L_i$.
		Thus, since $\abs{L_i}\ge\abs{I}$ for all $i\in I\setminus J$ by \cref{claim:chi16},
		we can assign at most $\abs{I}$ different colors to the stable sets in $\{Y_i:i\in I\}$ so that each $Y_i$ gets a color in $L_i$.
		Therefore we would obtain a proper coloring of $G$ respecting $T$, a contradiction.
		
		Hence, the rest of the proof will be devoted to~\cref{claim:chi17}.
		We may assume that $I'\ne\emptyset$.
		Put $d:=\abs{\mac C}-3k+1\ge\frac{1}{16}k$
		and $d':=d+s_1-1$;
		then~\cref{claim:chi13} yields
		$x_i\ge t_i(d+t'+s_1-1)=t_i(t'+d')$ for all $i\in I_1$.
		For every $i\in I'$, as $\abs{c_3(S^3\setminus S_i)}\le 2k-t'$ by~\eqref{eq:chi4},
		we see that
		\begin{equation}
			\label{eq:chi2}
			\abs{L_i}-i
			\ge\abs{\mac C}-\abs{c_3(S^3\setminus S_i)}-y_i-i
			\ge\abs{\mac C}-(2k-t')-(y_i+i)
			=d+k+t'-(y_i+i)-1.
		\end{equation}
		\begin{claim}
			\label{claim:chi12}
			We may assume that $t'\ge d'+1$.
		\end{claim}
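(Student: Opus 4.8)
The plan is to settle, inside this claim, the case $t'\le d'$ by proving \cref{claim:chi17} outright in that case (which, as observed above, already completes \cref{lem:chi}); the remaining conclusion is then $t'\ge d'+1$. So suppose $t'\le d'$, and set $u:=t'+d'=t'+d+s_1-1$, noting $u\ge 2t'$.

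First I would record what the supposition gives us. Since $x_i\ge t_i(t'+d')=t_iu$ for every $i\in I_1$ (as noted just before this claim, via \cref{claim:chi13}), summing over $I_1$ and using $\sum_{i\in[\chi]}x_i<kt'$ from \eqref{eq:chi9} yields $s_1u<kt'$, hence $s_1<k/2$. Moreover each $i\in I_1$ satisfies $t_iu\le x_i<k$ (so $t_i<k/u$) and $y_i\le\frac{x_i+t_ik}{t_i+1}$, while each $i\in I_0\cap I'$ satisfies $y_i=x_i$ with $t'\le x_i<k$; and $\abs{I'}\le k-1$ since $\sum_{i\in I'}x_i<kt'$ while $x_i\ge t'$ throughout $I'$.

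Next, relabel $I'$ as $\{1,\ldots,n\}$ so that $x_1\ge\cdots\ge x_n$; then $ix_i\le\sum_{j\le i}x_j<kt'$, so $i<kt'/x_i$. By \eqref{eq:chi2} it suffices to prove $y_i+i\le k+t'+d-1$ for every $i$, for then $\abs{L_i}\ge i$ for all $i$ and the greedy assignment to $Y_1,\ldots,Y_n$ in this order uses at most $n=\abs{I'}$ colours and gives each $Y_i$ a colour in $L_i$. For $i\in I_0\cap I'$ this is the computation from the proof of \cref{lem:4k-1}: $y_i+i=x_i+i<x_i+\frac{kt'}{x_i}\le k+t'$, since $x\mapsto x+kt'/x$ attains its maximum over $[t',k]$ at the two endpoints, each equal to $k+t'$; hence $y_i+i\le k+t'-1\le k+t'+d-1$.

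The case $i\in I_1$ is the heart of the matter. Here $y_i+i<\psi(x_i)$, where $\psi(x):=\frac{x+t_ik}{t_i+1}+\frac{kt'}{x}$, and since $y_i+i$ is a nonnegative integer it suffices to show $\psi(x_i)\le k+t'+d$. The function $\psi$ is convex, so on $[t_iu,k]$ it is bounded by the larger of $\psi(t_iu)$ and $\psi(k)$; since $\psi(k)=k+t'\le k+t'+d$, only $\psi(t_iu)\le k+t'+d$ remains, which after substituting $u=t'+d+s_1-1$ rearranges to $s_1-1\le\frac{u+k}{t_i+1}-\frac{kt'}{t_iu}$. I would verify this last inequality from the bounds collected above — $1\le t_i$, $t_iu<k$, and $s_1u<kt'$ — crucially together with $d\ge\frac1{16}k$. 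This is exactly where the hypothesis $\abs{\mac C}\ge(3+\frac1{16})k-1$ is used in this branch, and I expect the arithmetic here, balancing $s_1u<kt'$ against the slack afforded by $d$ over the whole range of admissible $t_i$, to be the main obstacle; everything else is either immediate or inherited from \cref{lem:4k-1}. Once $y_i+i\le k+t'+d-1$ holds for all $i\in I'$, the greedy colouring succeeds, so \cref{claim:chi17} holds when $t'\le d'$ and \cref{lem:chi} follows in that case; hence from now on we may assume $t'\ge d'+1$.
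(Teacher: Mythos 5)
Your overall strategy is the paper's: dispose of the case $t'\le d'$ by proving \cref{claim:chi17} outright there, so that one may thereafter assume $t'\ge d'+1$. But the decisive step is missing. For $i\in I_1$ you reduce everything to the inequality $\psi(t_iu)\le k+t'+d$ (equivalently $s_1-1\le\frac{u+k}{t_i+1}-\frac{kt'}{t_iu}$) and then say you \emph{would} verify it, calling the arithmetic ``the main obstacle'' and predicting that it is where $d\ge\frac{1}{16}k$ gets used. As written this central inequality is simply asserted, so the proof is incomplete; and the prediction is a misdiagnosis, which suggests the computation was not actually carried out.

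In fact the inequality holds for a reason that needs only $d\ge0$, and this is exactly how the paper argues. One has the identity
\[\frac{t_ik+x}{t_i+1}+\frac{kt'}{x}=k+t'+\frac{(k-x)\left((t_i+1)t'-x\right)}{x(t_i+1)},\]
and under the supposition $t'\le d'$ every $i\in I'$ satisfies $x_i\ge(t_i+1)t'$: for $i\in I_1$ because $x_i\ge t_i(t'+d')\ge 2t_it'\ge(t_i+1)t'$ (using $t_i\ge1$ and $d'\ge t'$, via \cref{claim:chi13}), and for $i\in I_0\cap I'$ because $x_i\ge t'=(t_i+1)t'$. Hence the correction term is nonpositive, giving $y_i+i<k+t'$ for all $i\in I'$ at once, with no case split, no endpoint analysis, and no appeal to $d\ge\frac{1}{16}k$ (which enters only later, in \cref{claim:chi14} and \eqref{eq:chi8}); then \eqref{eq:chi2} gives $\abs{L_i}\ge i$ and the greedy colouring finishes as you describe. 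Your convexity-plus-endpoints route would also close, by the same identity: at $x=t_iu$ one has $(t_i+1)t'-t_iu=t'-t_id'\le t'-d'\le0$, so $\psi(t_iu)\le k+t'\le k+t'+d$. But since you left precisely this verification open, the proposal as it stands has a genuine gap at its central point.
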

		\begin{subproof}
			Suppose that $t'\le d'$,
			then $x_i\ge t_i(t'+d')\ge t_i(t_i+1)t'\ge (t_i+1)t'$ for all $i\in I'$.
			Assume that $I'=[n]$ for some $n\ge1$,
			and that $x_1\ge\ldots\ge x_n$.
			For every $i\in[n]$,
			since $t_ik+x_i=\sum_{j\in[t_i+1]}w(P_{ij})\ge (t_i+1)y_i$,
			and since $i<kt'/x_i$ which follows from $kt'>x_1+\cdots+x_n\ge ix_i$ by~\eqref{eq:chi7}, we have that
			\[y_i+i<\frac{t_ik+x_i}{t_i+1}+\frac{kt'}{x_i}
			=k+t'+\frac{(k-x_i)((t_i+1)t'-x_i)}{x_i(t_i+1)}\le k+t'\]
			and so $y_i+i\le k+t'-1$. Consequently,~\eqref{eq:chi2} yields
			\[\abs{L_i}-i\ge d+k+t'-(k+t'-1)-1
			=d\ge0\]
			and so $\abs{L_i}\ge i$.
			Thus we can greedily assign at most $n=\abs{I'}$ different colors to the stable sets $Y_1,\ldots,Y_n$ so that each $Y_i$ gets a color in $L_i$, proving~\cref{claim:chi17}.
			Thus we may assume that $t'\ge d'+1$, as claimed.
		\end{subproof}
		Now, put $s:=s_1-1$; then $d'=d+s$ and $t'\ge d'+1\ge s+1\ge\abs{I_1}$ by~\cref{claim:chi12}.
		\begin{claim}
			\label{claim:chi14}
			We may assume that $s> d+s(t'+d')/k$.
		\end{claim}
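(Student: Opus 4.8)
The plan is to prove~\cref{claim:chi14} in the now-familiar ``otherwise we are already done'' style: I assume the asserted inequality fails, that is $s\le d+s(t'+d')/k$, and show that~\cref{claim:chi17} then holds outright, so that afterwards we may assume $s>d+s(t'+d')/k$. As in the proof of~\cref{claim:chi12}, order $I'=\{1,\dots,n\}$ so that $x_1\ge\cdots\ge x_n$; it then suffices to show $\abs{L_i}\ge i$ for every $i\in[n]$, since a greedy assignment will colour $Y_1,\dots,Y_n$ with distinct colours from $L_1,\dots,L_n$ and finish~\cref{claim:chi17}. Two per-index inequalities are available: $i<kt'/x_i$, from $ix_i\le x_1+\cdots+x_i\le\sum_{j\in[\chi]}x_j<kt'$ (this is~\eqref{eq:chi9}); and $y_i\le(t_ik+x_i)/(t_i+1)$, from $t_ik+x_i=\sum_{j\in[t_i+1]}w(P_{ij})\ge(t_i+1)y_i$. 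Combining them via the same identity as in~\cref{claim:chi12} gives
\[
y_i+i<k+t'+\frac{(k-x_i)\bigl((t_i+1)t'-x_i\bigr)}{(t_i+1)\,x_i}.
\]

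When $x_i\ge(t_i+1)t'$ the fraction is nonpositive, so $\abs{L_i}\ge i$ follows from~\eqref{eq:chi2}; by~\cref{claim:chi13} this handles every $i$ with $t_i=0$ and every $i\in I_1$ with $t_id'\ge t'$. The remaining indices lie in $I_1$ and have $t_id'<t'$, hence $t_i(t'+d')\le x_i<(t_i+1)t'$. For these I would replace~\eqref{eq:chi2} by the sharper bound $\abs{c_3(S^3\setminus S_i)}\le 2k-t'-\abs{S\cap S_i}-p_i$ of~\eqref{eq:chi4}, together with the full form $x_i\ge t_i\bigl(t'+d'+\abs{S\cap S_i}+q_i\bigr)$ of~\cref{claim:chi13}. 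Writing $b:=\abs{S\cap S_i}+q_i\ge0$ and $u:=t'+d'$, the requirement becomes $y_i+i\le\abs{\mac C}-2k+t'+\abs{S\cap S_i}+p_i$; one bounds the displayed fraction by its value at $x_i=t_i(u+b)$ (the endpoint of the window $[\,t_i(u+b),\min((t_i+1)t',k)\,)$ at which it is maximal), rounds $y_i$ and $i$ down separately — this integrality is essential, since the real-valued bound would be short by about $1$ — and substitutes $t'=u-d'$ and $d'=d+s$. What is left is a numerical inequality in $u$, parametrised by $t_i\ge1$ and $b\ge0$ with extremal case $t_i=1$, $b=0$, to be verified from: the standing hypothesis $s(k-u)\le dk$; the bounds $u+b<k/t_i$ (so in particular $u<k$) coming from $t_i(u+b)\le x_i<k$; the inequality $u=t'+d'\ge 2d'+1=2(d+s)+1$ from~\cref{claim:chi12}; and $16d\ge k$.

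The main obstacle is exactly this final inequality. Over the reals its quadratic form is negative on an interval between its two roots, and the crux is to show that the constraints above — chiefly how $s(k-u)\le dk$ interacts with $u\ge 2(d+s)+1$ and $16d\ge k$, reinforced by rounding $y_i$ and $i$ separately — keep $u$ (and hence $y_i+i$) on the right side. Pinning the threshold down to exactly $3+\frac{1}{16}$ rather than a larger constant forces this estimate to be carried out with almost no slack, so I expect it to be the only genuinely delicate point; the rest is the bookkeeping of~\cref{claim:chi12} pushed one level further.
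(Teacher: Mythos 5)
Your outer shell (assume $s\le d+s(t'+d')/k$ and deduce \cref{claim:chi17}, so that afterwards the opposite may be assumed) matches the paper, but the core of your argument is different and has a genuine gap at exactly the step you defer. You order all of $I'$ by $x_1\ge\cdots\ge x_n$ and try to prove $\abs{L_i}\ge i$ for every $i$, handling the problematic indices ($i\in I_1$ with $t_i(t'+d')\le x_i<(t_i+1)t'$) by an unverified ``numerical inequality in $u$.'' That inequality does not follow from the constraints you list. At your own extremal case $t_i=1$, $b=0$ it amounts to $\frac{(k-u)(u-2d')}{2u}\lesssim d+O(1)$ with $u=t'+d'$, which is (up to a factor $2$) the inequality~\eqref{eq:chi8} --- and the paper can only prove~\eqref{eq:chi8} by \emph{using} the conclusion of \cref{claim:chi14}, i.e.\ $s>d+s(t'+d')/k$, the opposite of the case you are in. Indeed, when $s$ is small the hypothesis $s(k-u)\le dk$ constrains nothing: take $s=0$ (so $s_1=1$, one index in $I_1$ with $t_i=1$, $d'=d\approx k/16$) and $u$ near $\sqrt{2kd'}$; then $\frac{(k-u)(u-2d')}{2u}$ is about $\tfrac12(\sqrt k-\sqrt{2d'})^2\approx 0.2k$, far exceeding $d+p_i+b+1\approx k/16$, and the rank bound $i<kt'/x_i$ you rely on permits $i$ to exceed $\abs{L_i}$ in such configurations. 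The integrality/rounding slack you invoke is $O(1)$ and cannot close a gap of order $k$. So the single-ordering greedy, as proposed, cannot be pushed through; the difficulty is not a delicate computation left to check but a missing idea.

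The paper's proof of \cref{claim:chi14} avoids this by decoupling $I_1$ from $I_0$. First it colors the sets $\{Y_i:i\in I_1\}$ outright: there are at most $\abs{I_1}\le s_1\le t'$ of them, and~\eqref{eq:chi2} with $y_i\le k-1$ gives $\abs{L_i}\ge d+t'\ge\abs{I_1}$, so the rank of these indices in the $x$-ordering never enters. Then for $i\in I'\setminus I_1\subset I_0$ it sharpens the rank bound using \cref{claim:chi13} applied to $I_1$: from~\eqref{eq:chi9}, $kt'>s_1(t'+d')+\sum_{j\in[n]}x_j$, hence $y_i+i=x_i+i<\phi(x_i)$ with $\phi(x)=x+\frac{kt'-s_1(t'+d')}{x}$, and by convexity $\phi(x_i)\le\max(\phi(t'),\phi(k-1))$ on $[t',k-1]$. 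The hypothesis $s\le d+s(t'+d')/k$ is used precisely here, to show $\phi(k-1)<k+t'-s_1+d$, giving $y_i+i\le k+t'-s_1+d-1$ and hence $\abs{L_i}\ge i+\abs{I_1}$ by~\eqref{eq:chi2}; this extra $\abs{I_1}$ is what lets the greedy coloring of the $I_0$ sets avoid the colors already spent on $I_1$. If you want to salvage your write-up, you should replace the single ordering and the deferred inequality by this two-stage argument (color $I_1$ first; subtract its weight $s_1(t'+d')$ from the budget for the remaining indices).
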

		\begin{subproof}
			Suppose that $s\le d+s(t'+d')/k$.
			For every $i\in I_1$, by~\eqref{eq:chi2}
			and since $t'\ge \abs{I_1}$, we have that
			\[\abs{L_i}\ge d+k+t'-y_i-1
			\ge d+k+t'-(k-1)-1
			=d+t'
			\ge\abs{I_1},\]
			so we can assign $\abs{I_1}$ different colors to the stable sets in $\{Y_i:i\in I_1\}$ so that each $Y_i$ gets a color in $L_i$.
			
			Now, assume that $I'\setminus I_1=[n]\subset I_0$ for some $n\ge 0$,
			and that $x_1\ge\ldots\ge x_n$.
			If $n=0$ then~\cref{claim:chi17} is proved, so we may assume that $n\ge1$. 
			Since $x_i\ge t_i(t'+d')$ for all $i\in I_1$,~\eqref{eq:chi9} implies that
			\begin{equation*}
				kt'>\sum_{i\in I_1}t_i(t'+d')+\sum_{i\in[n]}x_i
				=s_1(t'+d')+\sum_{i\in[n]}x_i.
			\end{equation*}
			Thus, for every $i\in[n]$, $i<(kt'-s_1(t'+d'))/x_i$, and so
			$y_i+i=x_i+i<x_i+\frac{kt'-s_1(t'+d')}{x_i}=\phi(x_i)$
			where $\phi\colon[t',\infty)\to\mab{R}$ is defined by $\phi(x):=x+\frac{kt'-s_1(t'+d')}{x}$.
			Since $\phi$ is convex and $t'\le x_i\le k-1$,
			we have that $\phi(x_i)\le\max(\phi(t'),\phi(k-1))$.
			Observe that $\phi(t')=k+t'-s_1-s_1d'/t'<k+t'-s_1$, and that
			\begin{align*}
				\phi(k-1)&=k-1+t'-\frac{(s_1-1)t'+s_1d'}{k-1}\\
				&< k-1+t'-\frac{s}{k}(t'+d')
				\le k-1+t'-s+d
				=k+t'-s_1+d
			\end{align*}
			where the last inequality follows from our supposition.
			Thus for every $i\in[n]$, $\phi(x_i)< k+t'-s_1+d$
			which yields $y_i+i\le k+t'-s_1+d-1$, and so~\eqref{eq:chi2} implies that
			\[\abs{L_i}-i\ge d+k+t'-(y_i+i)-1
			\ge d+k+t'-(k+t'-s_1+d-1)-1
			=s_1\ge\abs{I_1}.\]
			Hence $\abs{L_i}\ge i+\abs{I_1}$ for all $i\in[n]$, and so we can greedily assign $n=\abs{I'\setminus I_1}$ colors to the stable sets $Y_1,\ldots,Y_n$ 
			such that each $Y_i$ gets a color in $L_i$ not already used for any stable set in $\{Y_{i'}:i'\in I_1\}$.
			Since this proves~\cref{claim:chi17},
			we may thus assume that $s>d+s(t'+d')/k$, as claimed.
		\end{subproof}
		
		The last part of the proof reduces to proving that
		\begin{equation}
			\label{eq:chi8}
			\frac{(k-(t'+d'))(t'-d')}{t'+d'}\le 2d.
		\end{equation}
		
		To see how~\eqref{eq:chi8} concludes the proof of~\cref{claim:chi17} and thus the proof of~\cref{lem:chi},
		assume that $I'=[n]$ for some $n\ge1$ and $x_1\ge\ldots\ge x_n$.
		Then as in the proof of~\cref{claim:chi12}, we have that for every $i\in[n]$,
		\[y_i+i<k+t'+\frac{(k-x_i)((t_i+1)t'-x_i)}{x_i(t_i+1)}.\]
		If $x_i\ge (t_i+1)t'$ then $y_i+i<k+t'$.
		If $x_i\le(t_i+1)t'-1$, then $i\in I_1$ which yields $t_i\ge1$;
		thus since $k-1\ge x_i\ge t_i(t'+d')\ge t'+d'$ and $t'\ge d'+1$ by~\cref{claim:chi12}, by~\eqref{eq:chi8} we would obtain
		\[\frac{(k-x_i)((t_i+1)t'-x_i)}{x_i(t_i+1)}
		\le\frac{(k-x_i)(t'-d')}{x_i(t_i+1)}
		\le\frac{(k-(t'+d'))(t'-d')}{2(t'+d')}
		\le d \]
		so $y_i+i<k+t'+d$.
		We would have thus shown that $y_i+i\le k+t'+d-1$ for all $i\in[n]$, so~\eqref{eq:chi2} implies
		\[\abs{L_i}-i\ge d+k+t'-(k+t'+d-1)-1
		=0,\]
		hence we could greedily assign at most $n=\abs{I'}$ different colors to the stable sets $Y_1,\ldots,Y_n$ such that each $Y_i$ gets a color in $L_i$,
		proving~\cref{claim:chi17}.
		
		Now, to prove~\eqref{eq:chi8}, observe that
		\begin{align*}
			\frac{(k-(t'+d'))(t'-d')}{t'+d'}
			&=\frac{(k-(t'+d'))((t'+d')-2d')}{t'+d'}\\
			&=k+2d'-\frac{2kd'}{t'+d'}-(t'+d')
			\le k+2d'-2\sqrt{2kd'}
			=(\sqrt{k}-\sqrt{2d'})^2
		\end{align*}
		where the inequality follows from the arithmetic mean--geometric mean inequality.
		If $d'\ge 4d$, then since $d\ge\frac{1}{16}k$ we would have $d'\ge\frac{1}{4}k$
		so $(\sqrt{k}-\sqrt{2d'})^2\le (1-\frac{1}{\sqrt{2}})^2k<2d$ and~\eqref{eq:chi8} is proved.
		So we may assume $d'<4d$ which yields $s<3d$ and so $\frac{4}{3}s<d'$
		(recall that $d'=d+s$).
		
		By~\cref{claim:chi14}, we have that $s>d$ and $t'+d'<(1-d/s)k$.
		We claim that $(1-d/s)k<\sqrt{2kd'}$.
		Indeed, this is equivalent to $d/s+\sqrt{2d'/k}>1$, which is true since
		\[\frac{d}{s}+\sqrt{\frac{2d'}{k}}
		=\frac{d}{s}+\sqrt{\frac{d'}{2k}}+\sqrt{\frac{d'}{2k}}
		\ge 3\sqrt[3]{\frac{dd'}{2sk}}>1\]
		where the penultimate inequality follows from the arithmetic mean--geometric mean inequality,
		and the last inequality is true because $d\ge\frac{1}{16}k$ and $d'>\frac{4}{3}s$.
		
		Consequently, $t'+d'<(1-d/s)k<\sqrt{2kd'}$.
		Thus, because the function $x\mapsto 2kd'/x+x$ is strictly decreasing over $(0,\sqrt{2kd'})$, we deduce that
		\[\frac{2kd'}{t'+d'}+t'+d'
		>\frac{2sd'}{s-d}+\left(1-\frac{d}{s}\right)k
		=k+2d'+\frac{2dd'}{s-d}-\frac{kd}{s}
		=k+2d'-2d+\frac{4sd}{s-d}-\frac{kd}{s},\]
		and so
		\[\frac{(k-(t'+d'))(t'-d')}{t'+d'}
		=k+2d'-\frac{2kd'}{t'+d'}-(t'+d')
		<2d-\frac{4ds}{s-d}+\frac{kd}{s}
		=2d-\frac{d(4s^2-k(s-d))}{s(s-d)}.\]
		To complete the proof, it is enough to show that $4s^2-k(s-d)\ge0$
		which is equivalent to
		$4(s-\frac{1}{8}k)^2+k(d-\frac
		{1}{16}k)\ge0$.
		This proves~\eqref{eq:chi8} and~\cref{lem:chi}.
	\end{proof}
	
	We are now ready to finish the proof of~\Cref{thm:main}.
	\begin{proof}
		[Proof of~\Cref{thm:main}]
		Let $\abs{\mac{C}}$ be a set of $\chi(G)-1$ colors.
		Then $G$ is $\mac C$-inextensible;
		so there is a minimally $\mac C$-inextensible subgraph $H$ of $G$.
		Thus $H$
		is $(k+1)$-connected and has more than $\chi(G)-k$ vertices by~\Cref{lem:kappa},
		and satisfies $\chi(H)\ge\abs{\mac{C}}-2k+3
		=\chi(G)-2k+2$~by~\Cref{lem:chi}.
		This proves~\Cref{thm:main}.
	\end{proof}
	\section{Additional remarks}
	\label{sec:conclusion}
	There are some final points we would like to make.
	First, as remarked in \cref{sec:intro}, we conjecture that the constant $\frac{1}{16}$ in~\cref{thm:main}
	can be removed,
	which would yield $g(k,m)\le\max(m+2k-2,3k)$
	for all $k\ge1$ and $m\ge2$.
	A possible way to approach this conjecture is to prove~\cref{lem:chi} when $\abs{\mac{C}}\ge3k-1$.
	Note that one can simplify the proof of~\cref{lem:chi} to get $\chi(G)\ge\abs{\mac C}-3k+4$ for every inextensible graph $G$ whenever $\abs{\mac C}\ge3k-1$,
	which together with~\cref{lem:kappa} implies that $g(k,m)\le m+3k-3$ for all $k\ge1$ and $m\ge3$.
	
	Second, there is an analogue of $g(k,m)$ for list colorings as well (see~\cite[Section 5.4]{MR3822066} for preliminaries).
	For integers $k\ge1$ and $m\ge2$,
	let $g_{\ell}(k,m)$ be the least integer $n\ge1$ such that every graph with choosability at least $n$ contains a $(k+1)$-connected subgraph with choosability at least $m$.
	It was proved in~\cite{MR4453744}
	that $g_{\ell}(k-1,k)\le4k$ for all $k\ge2$;
	by modifying the argument outlined there, one could show the following
	which yields $g_{\ell}(k,m)\le m+3k-2$ for all $k\ge1$ and $m\ge2$.
	\begin{proposition}
		For every integer $k\ge1$, every graph $G$ with $\chi_{\ell}(G)\ge3k$ contains a $(k+1)$-connected subgraph with more than $\chi_{\ell}(G)-k$ vertices and choosability at least $\chi_{\ell}(G)-3k+2$.
	\end{proposition}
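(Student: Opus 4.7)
My plan is to transfer the template--inextensibility framework of Sections~\ref{sec:prelim} and~\ref{sec:conn} to the list coloring setting, then observe that the desired choosability lower bound falls out almost immediately, bypassing the intricate machinery of Section~\ref{sec:chi}. I would fix a list assignment $L$ on $V(G)$ with $|L(v)| = s := \chi_{\ell}(G) - 1 \ge 3k - 1$ and no proper $L$-coloring of $G$, and define an $L$-template on $G$ to be a triple $T = (S, c, F)$ where $c$ is a proper coloring of $G[S]$ with $c(v) \in L(v)$ for all $v \in S$ and $F(v) \subset L(v)$ for all $v \notin S$. The notions of $k$-cost, (good) $L$-inextensibility, and minimally $L$-inextensible graphs then have the obvious adaptations, and $G$ is trivially $L$-inextensible via the empty template, so it contains a minimally $L$-inextensible subgraph $H$.

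First I would prove the list-coloring analogues of~\Cref{lem:good} and~\Cref{lem:kappa}. I expect these to carry over essentially verbatim: each relevant estimate amounts to asserting that some color in $L(v) \setminus (c(S) \cup F(v))$ exists, which depends only on the size of the local list $L(v)$ rather than on a shared global palette. The arithmetic $|L(v)| - |S| - |F(v)| > 0$ is unchanged when $s \ge 3k - 1$, and the cutset argument gluing two partial colorings only requires each vertex to pick from its own list. This should give that $H$ is $(k+1)$-connected and has more than $s - k + 1 = \chi_{\ell}(G) - k$ vertices.

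The key new step is the choosability lower bound $\chi_{\ell}(H) \ge s - 3k + 3 = \chi_{\ell}(G) - 3k + 2$, where I would sidestep the delicate weight-balancing of Section~\ref{sec:chi} entirely. Given a good $L$-template $T = (S, c, F)$ on $H$ (so $|S| \le 2k - 1$ and $|F(v)| \le k - 1$ for $v \notin S$), I would define a list assignment $L^*$ on $V(H)$ by
\[L^*(v) := L(v) \setminus \bigl(F(v) \cup c(N(v) \cap S)\bigr) \text{ for } v \notin S,\qquad L^*(v) := L(v) \text{ for } v \in S,\]
so that $|L^*(v)| \ge s - (k-1) - (2k-1) = s - 3k + 2$ in both cases. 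If $f$ were any proper $L^*$-coloring of $H$, then the map $f'$ agreeing with $c$ on $S$ and with $f$ elsewhere would be a proper $L$-coloring of $H$ respecting $T$; the only nonobvious verification, for an edge from $u \in S$ to $v \notin S$, is that $f(v) \ne c(u)$, which holds because $c(u) \in c(N(v) \cap S)$ was removed when defining $L^*(v)$. This would contradict $T$'s witness of $L$-inextensibility, so $H$ has no proper $L^*$-coloring; after shrinking $L^*(v)$ for $v \in S$ to the uniform size $s - 3k + 2$ (which can only further restrict colorings) we conclude $\chi_{\ell}(H) \ge s - 3k + 3$.

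The hard part, I expect, will not be any single calculation but rather carefully verifying that each piece of the framework in Sections~\ref{sec:prelim} and~\ref{sec:conn} --- especially the cutset argument in the proof of~\Cref{lem:kappa} --- still runs correctly when colors are drawn from local lists with no required overlap across vertices. Once that is confirmed, combining the connectivity and choosability conclusions for $H$ yields the proposition. Notably, in contrast to~\Cref{thm:main}, neither the reduction step of~\Cref{lem:reduction} nor the refined case analysis producing the $3 + \tfrac{1}{16}$ constant is needed, because lower-bounding choosability only requires exhibiting a single non-colorable list assignment, which the good template already provides directly.
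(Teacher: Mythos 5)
Your proposal is correct and follows essentially the same route as the paper: adapt templates and (minimal) inextensibility to list assignments, mimic \Cref{lem:good} and \Cref{lem:kappa} for the connectivity and vertex-count part, and get the choosability bound from the simple count $\abs{S}+\abs{F(v)}\le 3k-2$, with none of the Section~\ref{sec:chi} machinery. The only cosmetic difference is that you exhibit the uncolorable list assignment $L^*$ directly (removing only $F(v)\cup c(N(v)\cap S)$), whereas the paper removes all of $c(S)$ and derives a contradiction with the supposed choosability bound; the two are interchangeable.
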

	\begin{proof}
		[Proof sketch]
		One can adapt the notions of templates and inextensibility in \cref{sec:prelim} to the list coloring setting, as follows.
		For a graph $G$, a set $\mac C$ of colors, and a list assignment $L\colon V(G)\to\mac C$,
		an {\em $L$-template on $G$} is a triple $T=(S,c,F)$ such that $S\subset V(G)$, $c\colon S\to\mac C$ is a proper $L$-coloring (in the usual sense) of $G[S]$, and $F(v)\subset L(v)$ for all $v\in V(G)\setminus S$.
		A proper $L$-coloring $f$ of $G$ {\em respects $T$} if $f\vert_S=c$ and $f(v)\in L(v)\setminus F(v)$ for all $v\in V(G)\setminus S$.
		Say that $G$ is {\em $L$-inextensible} if there is an $L$-template $T=(S,c,F)$ on $G$ with $\cost_k(T)<2k^2$ (the same $k$-cost function $\cost_k(T)=k\abs S+\sum_{v\in V(G)}\abs{F(v)}$),
		$\abs{F(v)}\le k$ for all $v\in V(G)\setminus S$, and there is no proper $L$-coloring of $G$ respecting $T$;
		and $G$ is {\em minimally $L$-inextensible} if every proper induced subgraph $H$ of $G$ is not {$L\vert_{V(H)}$-inextensible}.
		
		It is then not hard to mimic the proofs of \cref{lem:good,lem:kappa} to show that, if $G$ is $L$-inextensible and $\abs{L(v)}\ge 3k-1$ for all $v\in V(G)$,
		then there is an $L$-template $T=(S,c,F)$ witnessing the $L$-inextensibility of $G$ with $\abs{F(v)}\le k-1$ for all $v\in V(G)\setminus S$;
		and furthermore if $G$ is minimally $L$-inextensible, then $\abs G\ge \abs{L(v)}-k+1$ for all $v\in V(G)\setminus S$ and $G$ is $(k+1)$-connected. This implies the connectivity part of the proposition.
		
		For the chromatic part, it suffices to show that if $G$ is $L$-inextensible and $\abs{L(v)}\ge3k-1$ for all $v\in V(G)$, then it has choosability at least $\min_{v\in V(G)}\abs{L(v)}-3k+3$.
		Suppose not.
		As in the previous paragraph, there exists a template $T=(S,c,F)$ witnessing the $L$-inextensibility of $G$ with $\abs{F(v)}\le k-1$ for all $v\in V(G)\setminus S$.
		Note that $\abs S\le 2k-1$ since $k\abs S\le \cost_k(T)<2k^2$;
		and so $\abs S+\abs{F(v)}\le 3k-2$ for every $v\in V(G)\setminus S$.
		It follows that
		\[\abs{L(v)\setminus (c(S)\cup F(v))}
		\ge\abs{L(v)}-\abs S-\abs{F(v)}
		\ge\abs{L(v)}-3k+2\]
		for all $v\in V(G)\setminus S$; but this implies that $T$ does not witness the $L$-inextensibility of $G$, a contradiction.
	\end{proof}
	We remark that a slightly harder argument yields $g_{\ell}(k,m)\le m+3k-3$ for all $k,m\ge3$.
	Note that the lower bound construction in~\cite{MR902713} also shows that $g_{\ell}(k,m)\ge\max(m+k-1,2k+1)$ for all $k\ge1$ and $m\ge2$.
	It would be interesting to narrow the gap between the lower and upper bounds on $g_{\ell}(k,m)$,
	for instance to decide whether $g_{\ell}(k,m)\le\max(m+2k,Ck)$ for some constant $C>0$, similar to~\Cref{thm:main}.
	
	Finally, given the bounds $\max(m+k-1,2k+1)\le g(k,m)\le \max(m+2k-2,\ceil{(3+\frac{1}{16})k})$ for all $k\ge1$ and $m\ge2$, 
	it would be nice to prove that $g(k,m)\le\max(m+(1+\vep)k,Ck)$ for some universal constants $\vep\in(0,1)$ and $C>0$.
	New methods would be needed to accomplish this.
	Indeed, a loss of $2k-2$ on the chromatic number seems to be the best that the \dd template-inextensibility\ee{} method could produce,
	because of the optimality of the bound $\abs{\mac{C}}-2k+3$ in~\cref{lem:chi} as discussed in~\cref{sec:chi}.
	When $k=2$, the above bounds only yield $g(2,m)\in\{m+1,m+2\}$ for all $m\ge 5$;
	but it is natural to suspect $g(2,m)=m+1$ for all $m\ge4$,
	which is equivalent to the following conjecture.
	\begin{conjecture}
		\label{conj:3conn}
		For every integer $m\ge4$ and for every graph $G$ with chromatic number at least $m$,
		if there are nonadjacent vertices $u,v$ in $G$ for which every optimal coloring of $G$ assigns different colors to $u,v$,
		then $G$ contains a $3$-connected subgraph with chromatic number at least $m$. 
	\end{conjecture}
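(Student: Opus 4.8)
The plan is to prove, by induction on $\abs G$ and for all $m\ge4$, the pair of (by the paper's remark, equivalent) statements together: (i) every graph with chromatic number at least $m+1$ contains a $3$-connected subgraph with chromatic number at least $m$ — this is the bound $g(2,m)\le m+1$ — and (ii) the assertion of \cref{conj:3conn}. The basic observation is that the hypothesis of \cref{conj:3conn} says exactly $\chi(G/uv)=\chi(G)+1$, where $G/uv$ is obtained by identifying the nonadjacent vertices $u,v$; hence also $\chi(G-u-v)=\chi(G/uv)-1=\chi(G)$, so $u,v$ are both non-critical. Let $G$ be a minimal counterexample to (i) or (ii). Using (i) one may assume in case (ii) that $\chi(G)=m$, and in both cases $G$ is not $3$-connected, so there is a cutset $X$ with $\abs X\le2$ and a splitting $G=G_1\cup G_2$, $V(G_1)\cap V(G_2)=X$, $V(G_i)\setminus X\ne\emptyset$; note $\abs{G_i}<\abs G$.

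If $\abs X=1$, say $X=\{w\}$, then $\chi(G)=\max(\chi(G_1),\chi(G_2))$. A witnessing pair cannot lie inside the side of smaller chromatic number (one could then $m$-colour that side with the two vertices equal, via identifying them, and extend across $w$); if it lies inside the side of chromatic number $m$, then every optimal colouring of that side extends across $w$ and separates the pair, so that side is a strictly smaller instance of (ii) — contradiction. The remaining subcase is $u\in V(G_1)\setminus\{w\}$ and $v\in V(G_2)\setminus\{w\}$: here $G/uv$ is a $2$-sum of (slight modifications of) $G_1,G_2$ along $\{w,\tilde v\}$ with $\chi(G/uv)=m+1$, and running the $2$-cut analysis of (i) below on it produces a strictly smaller instance of (i) or (ii) supported on one of $G_1,G_2$.

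If $\abs X=2$, say $X=\{a,b\}$, record for each side $G_i$ the least numbers $\chi_s(G_i),\chi_d(G_i)$ of colours in a proper colouring of $G_i$ with $a,b$ equal, resp.\ different; then $\chi(G)=\min\big(\max(\chi_s(G_1),\chi_s(G_2)),\max(\chi_d(G_1),\chi_d(G_2))\big)$ and each of these numbers differs from $\chi(G_i)$ by at most $1$. For (i): either some side $G_i$ has $\chi(G_i)\ge m+1$ — then apply (i) inductively to $G_i\subset G$ — or else $a\not\sim b$ and some side $G_i$ has $\chi(G_i)=m$ while every optimal colouring of $G_i$ separates $a,b$, a strictly smaller instance of (ii); either way we win, closing the induction for (i). For (ii) with the witnessing pair inside one side, refine $(\chi_s,\chi_d)$ by also recording whether that pair can be equalised or separated under each constraint on $\{a,b\}$, and in each of the finitely many configurations replace $G$ by a strictly smaller instance: one of $G_1,G_2$, possibly with the edge $ab$ added, with $a,b$ identified, or with a bounded gadget attached at $\{a,b\}$ — a clique on at most $m-1$ vertices, a copy of $K_{m+1}$ minus an edge, or a pendant forced-separation pair — simulating the discarded side. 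Throughout one must check that any edge a $3$-connected subgraph of the smaller instance borrows from a gadget can be rerouted through the discarded side; this is fiddly but routine.

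The one case I do not see how to handle is (ii) when the witnessing pair is split by a $2$-cut: $u\in V(G_1)\setminus X$ and $v\in V(G_2)\setminus X$. Identifying $u,v$ now merges the two sides along a $3$-element cutset $\{a,b,\tilde v\}$, and while $\chi(G/uv)=m+1$, the chromatic number of a graph with a $3$-cut need not be close to that of either torso — a $3$-sum of two $m$-chromatic graphs can already be $(m+1)$-chromatic — so one cannot recurse into a torso with chromatic number as large as $m$, let alone one carrying a forced-separation pair. The reason every optimal colouring of $G$ separates $u,v$ then lies in a genuinely global interaction of $G_1$ and $G_2$ through $\{a,b\}$ that none of the local replacements above preserves. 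I expect this to be the main obstacle; it is the same ``loss'' that the paper identifies as intrinsic to the template-inextensibility method and to $g(k,m)$-bounds of this shape, and even settling the bare bound $g(2,m)\le m+1$ for the single value $m=4$ appears to require a new idea.
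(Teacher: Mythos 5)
Your proposal is not a complete proof, and you say so yourself; let me also point out that there is nothing in the paper to measure it against, because the paper does not prove \cref{conj:3conn} at all. The paper only proves the \emph{equivalence} of \cref{conj:3conn} with the bound $g(2,m)=m+1$ for $m\ge4$ (the forward direction via the $2$-cut argument and~\cite[Theorem 14.9]{MR2368647}, the reverse via the three-copies-glued-to-$H_{2,m}$ construction), notes that its own Theorem~\ref{thm:main} only gives $g(2,m)\le m+2$, and attributes an actual proof of the conjecture to Scott and Seymour by personal communication, with no argument included. So the relevant question is whether your induction closes on its own, and it does not. The part of your scheme that works is essentially the paper's forward equivalence: deriving (i) from smaller instances of (ii) at a $2$-cut is exactly the published reduction, and the simultaneous induction (first (i) for $G$ from smaller instances, then (ii) for $G$ using (i) for $G$) is structurally sound.

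The genuine gap is the one you name: case (ii) when the forced pair $u,v$ is separated by the cutset. For a $2$-cut $X=\{a,b\}$ with $u\in V(G_1)\setminus X$, $v\in V(G_2)\setminus X$, the hypothesis $\chi(G/uv)=\chi(G)+1$ lives in a graph whose natural decomposition is along the $3$-set $\{a,b,\tilde v\}$, and chromatic number is not controlled by torsos across a $3$-cut, so no strictly smaller instance of (i) or (ii) carrying the forced-separation structure is produced; the forcing is a global interaction of $G_1$ and $G_2$ through $\{a,b\}$, and none of your local surgeries (adding $ab$, identifying $a,b$, attaching a gadget) preserves it. The $1$-cut split case is not independent either, since you resolve it by appealing to the same missing $2$-cut analysis applied to $G/uv$. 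Moreover, even the cases you label ``fiddly but routine'' are not: if you attach a clique on up to $m-1$ vertices or a $K_{m+1}$ minus an edge at the two vertices $a,b$ to simulate the discarded side, the smaller instance may return a $3$-connected subgraph with chromatic number at least $m$ that consists mostly of gadget vertices, and such a subgraph cannot in general be rerouted through the discarded side, which meets the kept side in only two vertices; so that step needs a real argument, not an assertion. As the paper's closing discussion indicates (a loss of $2k-2$ being intrinsic to the template-inextensibility method, and $g(2,m)$ pinned only to $\{m+1,m+2\}$), getting $g(2,m)=m+1$, equivalently \cref{conj:3conn}, does require an idea beyond both the paper's method and the decomposition scheme you outline.
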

	(Here an \emph{optimal coloring} of $G$ is a proper coloring of $G$ using $\chi(G)$ colors.)
	\begin{proof}
		[Proof of $g(2,m)=m+1$ for all $m\ge4$, assuming \cref{conj:3conn}]
		Let $G$ be a graph with $\chi(G)=m+1$;
		and we may assume that every subgraph of $G$ with fewer edges than $G$ has chromatic number at most $m$.
		Suppose that $G$ has no $3$-connected subgraphs;
		then $G$ would have a cutset $S=\{u,v\}$ with $u,v$ nonadjacent in $G$ and two nonempty vertex sets $A,B$ with $A\cup B=V(G)\setminus S$ such that
		there are no edges between $A$ and $B$.
		By~\cite[Theorem 14.9]{MR2368647},
		there would be $H\in\{G[S\cup A],G[S\cup B]\}$
		such that $\chi(H)=m$ and every optimal coloring of $H$ assigns to $u,v$ different colors;
		and so $H$ would be a counterexample to~\Cref{conj:3conn}, a contradiction.
		Hence $G$ is $3$-connected and so $g(2,m)=m+1$ for~all~$m\ge4$.
	\end{proof}
	\begin{proof}
		[Proof of \cref{conj:3conn}, assuming $g(2,m)=m+1$ for all $m\ge4$]
		Assume that there exists a counterexample $G$ to~\Cref{conj:3conn} with two special vertices $u,v$.
		Let $G_1,G_2,G_3$ be disjoint copies of $G$ with $u_i,v_i$ being the respective copies of $u,v$ for $i=1,2,3$;
		and let $H_{2,m}$ be the graph constructed in the discussion on~\cref{lem:chi} with stable set $S=\{x_1,x_2,x_3\}$.
		Then for $i=1,2,3$, identify $u_i$ with $x_i$ and $v_i$ with $x_{i+1}$ (here $x_1=x_4$);
		and let $H$ be the resulting graph.
		If $\chi(H)=m$ then $\chi(G)=m$; and any optimal coloring of $H$ would color $u_i,v_i$ differently for each $i\in\{1,2,3\}$,
		which implies that $x_1,x_2,x_3$ would receive different colors.
		But this is a contradiction since each of them is adjacent to the vertices in $H_{2,m}\setminus S$ which is a complete graph on $m-2$ vertices.
		This shows that $\chi(H)=m+1$;
		and moreover, it is not hard to see that $H$ has no $3$-connected subgraphs with chromatic number at least $m$,
		which yields $g(2,m)=m+2$.
	\end{proof}
	We would like to remark that Alex Scott and Paul Seymour (personal communication) have recently proved \cref{conj:3conn}.
	
	\subsection*{Acknowledgements}
	The author would like to thank Paul Seymour for helpful comments and encouragement.
	He would also like to thank the anonymous referee for helpful comments.

\end{document}